\documentclass[11pt]{amsart}
\usepackage[english]{babel}
\usepackage[T1]{fontenc}
\usepackage[latin1]{inputenc}
\usepackage{graphicx}
\usepackage{amsmath,amssymb,amsthm,mathrsfs,txfonts,ifthen,xr}
\usepackage{soul}
\usepackage{array, multirow}
\usepackage{stmaryrd}
\usepackage{color}
\usepackage{hyperref}
\usepackage{enumerate}
\usepackage[all]{xy}
\usepackage{tikz-cd}
\usetikzlibrary{decorations.pathreplacing}

\usepackage{ytableau}

\theoremstyle{plain}
\newtheorem{theo}{Theorem}[section]
\newtheorem*{theo*}{Theorem}
\newtheorem{lemma}[theo]{Lemma}
\newtheorem{proposition}[theo]{Proposition}
\newtheorem{corollary}[theo]{Corollary}

\theoremstyle{definition}
\newtheorem{definition}[theo]{Definition}

\newtheorem{notation}[theo]{Notation} 
\newtheorem{conjecture}[theo]{Conjecture}
\newtheorem*{conjecture*}{Conjecture}
\newtheorem{example}[theo]{Example}

\theoremstyle{remark}
\newtheorem{remark}[theo]{Remark}

\def\A{{\rm A}}
\def\B{{\rm B}}
\def\C{{\rm C}}
\def\D{{\rm D}}
\def\d{{\rm d}}

\def\F{{\rm F}}

\def\H{{\rm H}}
\def\I{{\rm I}}
\def\J{{\rm J}}

\def\L{{\rm L}}
\def\M{{\rm M}}

\def\P{{\rm P}}

\def\Q{{\rm Q}}

\def\R{{\rm R}}
\def\S{{\rm S}}

\def\T{{\rm T}}
\def\U{{\rm U}}
\def\V{{\rm V}}
\def\W{{\rm W}}
\def\X{{\rm X}}
\def\Y{{\rm Y}}

\def\tr{{\rm tr}}

\def\Id{{\rm Id}}
\def\Re{{\rm Re}}

\def\Mat{{\rm Mat}}

\def\log{{\rm log}}

\def\Spec{{\rm Spec}}
\def\diag{{\rm diag}}

\allowdisplaybreaks
\title{Towards a Characterization of  Matrix Monotonicity via Radial Convexity}

\author{Ma\l{}gorzata M. Czerwi\'nska}
\address{Department of Mathematics and Statistics\\ University of North Florida \\ 1 UNF Drive \\ Jacksonville \\ FL 32224 \\ USA}
\email{m.czerwinska@unf.edu}

\author{Jose A. Franco}
\address{Department of Mathematics and Statistics\\ University of North Florida \\ 1 UNF Drive \\ Jacksonville \\ FL 32224 \\ USA}
\email{jose.franco@unf.edu}

\author{Allan Merino}
\address{Department of Mathematics and Statistics\\ University of North Florida \\ 1 UNF Drive \\ Jacksonville \\ FL 32224 \\ USA}
\email{allan.merino@unf.edu}

\keywords{Radially Convex, Matrix Monotone, Loewner Matrix, Hadamard Product}

\subjclass[2020]{Primary: 47A64; Secondary: 15B33, 15B48.}

\date{}

\begin{document}

\begin{abstract}

The theory of matrix monotone functions of finite degree depends subtly on the matrix size. Motivated by this phenomenon, we introduce radial convexity of metrics with respect to matrix orders on the cone of positive definite matrices. We establish radial convexity of several natural metrics with respect to the Loewner order and then focus on the Frobenius norm. Using the Daleckii-Krein formula, we relate radial convexity to the positivity of the Hadamard square of the corresponding Loewner matrix. This leads us to conjecture that radial convexity of order $n$ is equivalent to matrix monotonicity of degree $n$, and we prove this equivalence in degree $2$. We further provide evidence in higher dimensions, obtain a characterization in degree $3$, and discuss extensions of our results to arbitrary unitarily invariant norms.

\end{abstract}

\maketitle

\section{Introduction}

In matrix analysis and operator theory there are two hallmark concepts that undergird the study of several interesting objects. These two concepts are the idea of operator or matrix monotonicity and the study of the geometry of the cone of positive operators or positive definite matrices via different metrics. It should not be surprising that these two concepts have been intertwined in the study of matrix analysis. 

\noindent The study of means of positive definite matrices is closely connected with monotonicity. A consequence of the operator monotonicity of the associated function to a Kubo-Ando mean is the natural order-theoretic in-betweenness property
\begin{equation*}
\A \leq \A \sigma \B \leq \B\,.
\end{equation*}
This property plays an important role in the theory of Kubo-Ando means and provides a direct matrix analogue of scalar betweenness.

\noindent The geometry connection comes with the geometric in-betweenness  proposed by Audenaert in the study of matrix means that do not necessarily satisfy the axioms of Kubo-Ando theory \cite{Audenaert2013}. In this approach, the order relation is replaced by a metric condition, requiring the mean to lie metrically between its arguments. Given a metric $\d$ on the cone $\mathbb{P}_{n}$ of positive definite $n\times n$ matrices,, geometric in-betweenness is expressed by the conditions
\begin{equation*}
\d(\A, \A \sigma \B) \leq \d(\A, \B) \qquad \text{ and } \qquad \d(\B, \A \sigma \B) \leq \d(\A, \B)\,.
\end{equation*}
At first sight, one might expect order-theoretic in-betweenness to imply geometric in-betweenness. This is indeed the case for several familiar geometries on $\mathbb{P}_{n}$. Consider the case of an $f$-metric induced by a matrix monotone function $f$ of degree $n$ as follows,
\begin{equation*}
\d_{\|\cdot\|,f}(\A,\B) :=
\left\|f(\A)-f(\B)\right\|
\end{equation*}
for a unitarily invariant norm $\|\cdot\|$.  Then 
\begin{equation*}
\A \leq \C \leq \B \qquad \Longrightarrow \qquad f(\A) \leq f(\C) \leq f(\B)\,\quad\text{or}\quad f(\A) \geq f(\C) \geq f(\B) ,
\end{equation*}
which implies that
\begin{equation*}
\d_{\left\|\cdot\right\|, f}(\A,\C) \leq \d_{\left\|\cdot\right\|,f}(\A, \B)\,\quad\text{and}\quad\d_{\left\|\cdot\right\|, f}(\B,\C) \leq \d_{\left\|\cdot\right\|,f}(\A, \B).
\end{equation*}
See Proposition \ref{Prop:MonoConv} for more details. This class includes many familiar metrics from matrix analysis, such as those induced by unitarily invariant norms, the Hellinger metric, and the chaotic Thompson metric. 

\noindent However, the example \(f(x)=x^2\) shows that an \(f\)-metric need not be compatible with the Loewner order: even if
\begin{equation*}
\A \leq \C \leq \B\,,
\end{equation*}
it may happen that
\begin{equation*}
\d(\A,\C) > \d(\A,\B)\,.
\end{equation*}
Thus, geometric in-betweenness does not follow from the order relation alone, but depends on the compatibility between the metric \(\d\) and the order structure of \(\mathbb{P}_n\). This leads naturally to the question when does the order relation imply the geometric relation.

\noindent This question has been studied in the setting of partially ordered metric spaces, where such spaces are said to be radially convex (see \cite{Ok2025} and the references therein). In view of the broader types of spaces that arise in matrix analysis and operator theory, we adopt the following more general definition.

\begin{definition}[Radial Convexity]

Let $\mathcal{X}$ be a set equipped with a binary relation $\preceq$, and let
\begin{equation*}
\d: \mathcal{X} \times \mathcal{X}\longrightarrow[0,\infty)
\end{equation*}
be a nonnegative function. We say that $\d$  is radially convex with respect to $\preceq$ if
\begin{equation*}
\A \preceq \C \preceq \B
\end{equation*}
implies
\begin{equation*}
\d(\A, \C) \leq \d(\A, \B) \qquad \text{ and } \qquad \d(\C, \B) \leq \d(\A, \B)\,.
\end{equation*}
\end{definition}

\begin{remark}

In the above definition, $\preceq$ is not required to be a partial order and $\d$ is not required to be a metric. This additional generality is useful in matrix analysis, where several natural relations are order-like without being partial orders, and where nonnegative divergences frequently arise in place of metrics.

\noindent Moreover, if $\D = \d^{2}$, then radial convexity with respect to $\d$ is equivalent to radial convexity with respect to $\D$, since the function $t\mapsto t^{2}$ is increasing on $\left[0, \infty\right)$. This observation will allow us to work interchangeably with a metric and its square when convenient.

\end{remark}

The paper is organized as follows. Section 2 contains the necessary preliminaries on the Loewner order, matrix monotone functions, and Loewner matrices. In Section 3, we establish radial convexity for several natural metrics on $\mathbb{P}_n$. We then show that matrix monotonicity provides a general mechanism for radial convexity. More precisely, if $f$ is matrix monotone of degree $n$ and $\left\|\cdot\right\|$ is a unitarily invariant norm, then the metric
\begin{equation*}
d_{\left\|\cdot\right\|,f}(\A,\B) = \left\|f(\A)-f(\B)\right\|
\end{equation*}
is radially convex with respect to the Loewner order on $\mathbb{P}_{n}$.

\noindent This provides a large family of metrics for which order-theoretic in-betweenness implies metric in-betweenness, and includes several classical geometries on the cone of positive definite matrices.

\medskip

\noindent In Section 4, we investigate the converse implication, concentrating on the metric induced by the Frobenius norm. In this case, the Hilbert space structure of the Frobenius norm allows us to obtain a characterization of radial convexity. From this point on, since the Frobenius norm is fixed, we simply say that $f$ is radially convex of order $n$ if the metric induced by $f$ is radially convex with respect to the Loewner order on $\mathbb{P}_{n}$. More precisely, radial convexity implies that
\begin{equation*}
\Re\left\langle\D f(\A)(\X),\D f(\A)(\Y)\right\rangle_{\F} \geq 0
\end{equation*}
for every $\A \in \mathbb{P}_{n}$ and every $\X, \Y \geq 0$. Using the Daleckii-Krein formula
\begin{equation*}
\D f(\A)(\H) = \L_{f}(\A)\circ\H\,,
\end{equation*}
we translate this differential condition into a condition on the Loewner matrix $\L_{f}(\A)$ (see Definition \ref{DefinitionLoewner}). In particular, radial convexity implies
\begin{equation*}
\L_{f}(\A) \circ \L_{f}(\A) \geq 0
\end{equation*}
for every $\A\in\mathbb{P}_{n}$, where $\circ$ denotes the Hadamard product. Thus, the converse problem is closely related to the question of when positivity of the Hadamard square of a Loewner matrix forces positivity of the Loewner matrix itself.

\noindent The results obtained in this paper for the Frobenius norm are summarized in the following diagram.

\bigskip

\begin{center}
\begin{tikzcd}[
column sep=large, row sep=large, cells={nodes={draw, text width=4.2cm, minimum height=1.2cm, align=center, inner sep=3pt}}]

\substack{f \text{ is increasing and radially } \\ \text{ convex of order } n}
\arrow[d, Rightarrow, shorten <=3pt, shorten >=3pt]
\arrow[r, Leftarrow, shorten <=3pt, shorten >=3pt]
&
\substack{f \text{ is increasing and matrix } \\ \text{ monotone of degree } n}
\arrow[d, Leftrightarrow, shorten <=3pt, shorten >=3pt] \\

\substack{\langle \D f(\A)(\X)\,,\D f(\A)(\Y)\rangle_{\F} \geq 0 \\ \left(\forall \A \in \mathbb{P}_{n}\right) \thinspace \left(\forall \X, \Y \in \mathbb{H}^{+}_{n}\right)}
\arrow[d, Leftrightarrow, shorten <=3pt, shorten >=3pt]
\arrow[r, Leftarrow, shorten <=3pt, shorten >=3pt]
&
\substack{\D f(\A)(\H) \geq 0 \\ \left(\forall \A \in \mathbb{P}_{n}\right) \thinspace \left(\forall  \H \in \mathbb{H}^{+}_{n}\right)}
\arrow[d, Leftrightarrow, shorten <=3pt, shorten >=3pt] \\

\substack{\L_{f}(\A) \circ \L_{f}(\A) \geq 0 \\ \left(\forall \A \in \mathbb{P}_{n}\right)}
\arrow[r, Leftarrow, shorten <=3pt, shorten >=3pt]
&
\substack{\L_{f}(\A) \geq 0 \\ \left(\forall \A \in \mathbb{P}_{n}\right)}
\end{tikzcd}
\end{center}

\bigskip

\noindent These results lead us to the following conjecture. 

\begin{conjecture*}

Let $f: \left(0, \infty\right) \to \mathbb{R}$. Then $f$ is radially convex of order $n$ (with respect to the Frobenius norm) if and only if $f$ is matrix monotone of degree $n$\,.

\end{conjecture*}

\noindent We prove this in degree $2$. Indeed, in this case positivity of
$\L_f(\A) \circ \L_f(\A)$ forces positivity of $\L_f(\A)$, and hence radial convexity of order $2$ is equivalent to matrix monotonicity of degree $2$. In degree $3$, we show that radial convexity already provides the principal minor conditions required for matrix monotonicity, leaving the sign of the determinant of the Loewner matrix as the remaining obstruction. We also give explicit higher-dimensional examples supporting the conjecture. Finally, Section 5 contains concluding remarks and investigates to what extent the degree $2$ result persists for arbitrary unitarily invariant norms.

\section{Preliminaries}

In this section, we recall some basic facts concerning Hermitian matrices, the Loewner order, and operator/matrix monotone functions. Throughout this paper, we denote by $\Mat(n) := \Mat(n, \mathbb{C})$ the algebra of $n\times n$ matrices with complex entries.

\noindent We denote by $\U_{n}$ the unitary group
\begin{equation*}
\U_{n} := \left\{g \in \Mat(n)\,, gg^{*} = \Id_{n}\right\},
\end{equation*}
and by $\mathbb{H}_{n}$ the real vector space of Hermitian matrices of size $n$,
\begin{equation*}
\mathbb{H}_{n} := \left\{\X \in \Mat(n)\,, \X^{*} = \X\right\}.
\end{equation*}

\noindent Recall that for every $\A \in \mathbb{H}_{n}$, there exists a unitary matrix $\U \in \U_{n}$ and real numbers $\lambda_{1}\,, \ldots\,, \lambda_{n}$ such that
\begin{equation*}
\A = \U\diag(\lambda_{1}\,,\ldots\,,\lambda_{n})\U^{*},
\end{equation*}
so that the spectrum of a Hermitian matrix is contained in $\mathbb{R}$. Similarly, we denote by $\mathbb{H}^{+}_{n}$ and $\mathbb{P}_{n}$ the subsets of $\mathbb{H}_{n}$ given by
\begin{equation*}
\mathbb{H}^{+}_{n} := \left\{\X \in \mathbb{H}_{n}\,, \X \geq 0\right\}\,, \qquad  \mathbb{P}_{n} := \left\{\X \in \mathbb{H}_{n}\,, \X > 0\right\}\,.
\end{equation*}

\begin{definition}

Let $\A\,, \B \in \mathbb{P}_{n}$. We say that $\A$ is smaller than or equal to $\B$ with respect to the Loewner order, and write $\A \leq \B$, if $\B - \A \in \mathbb{H}^{+}_{n}$\,. 

\end{definition}

\begin{remark}

Similarly, we write $\A<\B$ whenever $\B-\A \in \mathbb{P}_{n}$. Notice that the Loewner order $\leq$ is defined on the entire space $\mathbb{H}_{n}$, although in this paper we will be primarily interested in its restriction to the positive definite cone $\mathbb{P}_{n}$\,.

\end{remark}

\noindent Let $f:\I\to\mathbb{R}$ be a continuous function, where $\I$ is a subinterval of $(0,\infty)$. If $\A \in \mathbb{H}_{n}$ satisfies $\Spec(\A) \subseteq \I$, the functional calculus allows us to define $f(\A)$. Indeed, if
\begin{equation*}
\A = \U\diag(\lambda_{1}\,,\ldots\,,\lambda_{n})\U^{*}\,,
\end{equation*}
then
\begin{equation*}
f(\A) := \U\diag\left(f(\lambda_{1})\,,\ldots\,,f(\lambda_{n})\right)\U^{*}\,.
\end{equation*}
This definition does not depend on the choice of the unitary diagonalization of $\A$. We now recall the notion of an operator monotone function\,.

\begin{definition}

Let $\I \subseteq \mathbb{R}$ be an interval. A function $f: \I \longrightarrow \mathbb{R}$ is said to be operator monotone increasing if, for every $n \geq 1$ and every $\A\,, \B \in \mathbb{H}_{n}$ such that $\Spec(\A)\,, \Spec(\B) \subseteq \I$ and $\A \leq \B$, we have
\begin{equation*}
f(\A)\leq f(\B)\,.
\end{equation*}
Similarly, $f$ is said to be operator monotone decreasing if
\begin{equation*}
\A \leq \B \qquad \Longrightarrow \qquad f(\A) \geq f(\B)
\end{equation*}
for every $n \geq 1$ and every $\A\,, \B \in \mathbb{H}_{n}$ with spectra contained in $\I$\,.

\end{definition}

\begin{remark}

Clearly, $f$ is operator monotone decreasing if and only if $-f$ is operator monotone increasing. When no confusion can arise, we will simply say that $f$ is operator monotone if it is operator monotone in one of these two directions\,.

\end{remark}

\noindent In this paper, we will mainly consider functions defined on the positive half-line
\begin{equation*}
f: \left(0, \infty\right) \longrightarrow \mathbb{R}\,,
\end{equation*}
and hence the functional calculus will be applied to matrices in $\mathbb{P}_{n}$ for all $n$\,.

\begin{notation}

For a continuously differentiable function $f: \I \longrightarrow \mathbb{R}$, and $\A \in \mathbb{P}_{n}$ such that $\Spec(\A) \subseteq \I$, we denote by
\begin{equation*}
\D f(\A): \mathbb{H}_{n} \longrightarrow \mathbb{H}_{n}
\end{equation*}
the Fr\'echet derivative of the matrix function induced by $f$ at $\A$, i.e.
\begin{equation*}
\D f(\A)(\H) = \lim\limits_{t \to 0} \frac{f(\A + t\H) - f(\A)}{t}\,, \qquad \left(\H \in \mathbb{H}_{n}\right)\,.
\end{equation*}

\end{notation}

\noindent We recall a differential characterization of operator monotone functions (see \cite{BHATIA})\,.

\begin{proposition}

Let $f: \I \to \mathbb{R}$ be continuously differentiable. Then $f$ is operator monotone increasing if and only if, for every $n\geq1$, every $\A \in \mathbb{P}_{n}$ with $\Spec(\A) \subseteq \I$, and every $\H \in \mathbb{H}^{+}_{n}$, 
\begin{equation*}
\D f(\A)(\H)\geq0\,.
\end{equation*}
Similarly, $f$ is operator monotone decreasing if and only if
\begin{equation*}
\D f(\A)(\H) \leq 0
\end{equation*}
for every $n \geq 1$, every $\A \in \mathbb{P}_{n}$ with $\Spec(\A) \subseteq \I$, and every $\H \in \mathbb{H}^{+}_{n}$\,.

\label{PropositionMonotoneOne}

\end{proposition}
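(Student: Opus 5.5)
The plan is to prove the increasing case directly and then get the decreasing case by replacing $f$ with $-f$. The linking identity is the integral representation obtained from the fundamental theorem of calculus along a segment in $\mathbb{P}_{n}$. Fix $n$ and let $\A, \B \in \mathbb{H}_{n}$ have spectra in $\I$. Since $\I$ is an interval and eigenvalues of convex combinations of Hermitian matrices lie in the convex hull of the extreme eigenvalues, the whole segment $\A_{t} = \A + t(\B-\A)$, $t \in [0,1]$, has spectrum in $\I$. Continuous differentiability of $f$ gives continuous differentiability of the matrix function on the open set of Hermitian matrices with spectrum in $\I$. Hence
\begin{equation*}
f(\B) - f(\A) = \int_{0}^{1} \D f(\A_{t})(\B - \A)\, dt\,.
\end{equation*}

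For sufficiency, assume $\D f(\C)(\H) \geq 0$ for all admissible $\C$ and all $\H \in \mathbb{H}^{+}_{n}$. If $\A \leq \B$, then $\H = \B - \A \in \mathbb{H}^{+}_{n}$, so every integrand is positive semidefinite. Since $\mathbb{H}^{+}_{n}$ is a closed convex cone, the integral, a limit of Riemann sums with nonnegative weights, is also in $\mathbb{H}^{+}_{n}$. Thus $f(\A) \leq f(\B)$. One small point needs care: the statement is phrased with $\A \in \mathbb{P}_{n}$, while the segment may leave $\mathbb{P}_{n}$ if $\I$ contains nonpositive numbers. In this paper $\I \subseteq (0,\infty)$, so all matrices involved are positive definite and the issue does not arise.

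For necessity, assume $f$ is operator monotone increasing. Take admissible $\A$ and $\H \geq 0$. For small $t > 0$, the matrix $\A + t\H$ still has spectrum in $\I$ (when $\I$ is open; endpoints would need one-sided care), and $\A \leq \A + t\H$. So $\bigl(f(\A+t\H) - f(\A)\bigr)/t \geq 0$. Letting $t \to 0^{+}$ and using closedness of $\mathbb{H}^{+}_{n}$ gives $\D f(\A)(\H) \geq 0$.

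The main obstacle is justifying that the matrix function is continuously Fr\'echet differentiable along the segment, which the integral formula needs. I would handle this by citing the Daleckii--Krein theory: for $f \in C^{1}$, $\D f(\A)(\H) = \U\bigl(\L_{f}(\Lambda)\circ(\U^{*}\H\U)\bigr)\U^{*}$, which depends continuously on $\A$. Alternatively, I would approximate $f$ uniformly in $C^{1}$ on compacts by polynomials, for which the formula is elementary, and pass to the limit.
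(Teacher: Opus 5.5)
Your proof is correct and is essentially the standard argument in the reference the paper defers to (the paper gives no proof of its own, only pointing to Bhatia). That argument uses nonnegative difference quotients $\bigl(f(\A+t\H)-f(\A)\bigr)/t$ for necessity, and $f(\B)-f(\A)=\int_{0}^{1}\D f(\A_{t})(\B-\A)\,dt$ for sufficiency. The only soft spot is that continuity of $\A\mapsto\D f(\A)$ cannot simply be read off the Daleckii--Krein formula, since the unitary $\U$ generally cannot be chosen continuously in $\A$. So either lean on your polynomial-approximation fallback, or avoid the issue by applying the mean value theorem to the real function $t\mapsto x^{*}f(\A_{t})x$, whose derivative $x^{*}\D f(\A_{t})(\B-\A)x$ is nonnegative; this needs only differentiability, not continuity of the derivative.
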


\begin{proof}

See \cite{BHATIA} (see also \cite{BHATIASANO})\,.

\end{proof}

\begin{lemma}

Let $\A\in\mathbb{P}_n$ and suppose that $\A=\P\Delta\P^*$, where $\Delta$ is diagonal and $\P\in\U_n$.
Then
\begin{equation*}
\D f(\A)(\H) = \P \D f(\Delta)(\P^{*}\H\P)\P^{*}\,, \qquad \left(\H \in \mathbb{H}^{+}_n\right).
\end{equation*}
In particular, $\D f(\A)(\H) \geq 0$ for all $\H \in \mathbb{H}^{+}_{n}$ if and only if $\D f(\Delta)(\X) \geq 0$ for all $\X \in \mathbb{H}^{+}_{n}$\,.

\label{LemmaAtoD}

\end{lemma}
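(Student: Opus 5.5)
The plan is to derive the identity directly from the unitary covariance of the functional calculus, and then read off the positivity statement from the fact that unitary conjugation preserves the cone $\mathbb{H}^{+}_{n}$.

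\emph{Covariance of the functional calculus.} First I would record that for any $\B \in \mathbb{H}_{n}$ with $\Spec(\B) \subseteq \I$ and any $\P \in \U_{n}$,
\begin{equation*}
f(\P\B\P^{*}) = \P f(\B)\P^{*}.
\end{equation*}
To see this, write $\B = \V\diag(\mu_{1},\ldots,\mu_{n})\V^{*}$. Then $\P\B\P^{*} = (\P\V)\diag(\mu_{1},\ldots,\mu_{n})(\P\V)^{*}$ with $\P\V \in \U_{n}$. The definition of the functional calculus, together with its independence of the chosen diagonalization, gives the claim. Note that the spectrum is unchanged by conjugation, so $f(\P\B\P^{*})$ is defined.

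\emph{The derivative formula.} Fix $\H \in \mathbb{H}_{n}$ and put $\X := \P^{*}\H\P \in \mathbb{H}_{n}$. Then $\A + t\H = \P(\Delta + t\X)\P^{*}$. For $|t|$ small, $\Spec(\Delta + t\X)$ stays in the open interval $\I$ by continuity of eigenvalues. Applying the covariance step to both $\Delta + t\X$ and $\Delta$, we get
\begin{equation*}
\frac{f(\A+t\H)-f(\A)}{t} = \P\,\frac{f(\Delta+t\X)-f(\Delta)}{t}\,\P^{*}.
\end{equation*}
Conjugation by $\P$ is a continuous linear map on $\Mat(n)$, so we may let $t \to 0$. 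This yields $\D f(\A)(\H) = \P\,\D f(\Delta)(\P^{*}\H\P)\,\P^{*}$. The identity holds for all Hermitian $\H$, in particular for $\H \in \mathbb{H}^{+}_{n}$.

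\emph{The equivalence.} The map $\H \mapsto \P^{*}\H\P$ is a bijection of $\mathbb{H}^{+}_{n}$ onto itself, with inverse $\X \mapsto \P\X\P^{*}$. This holds because $\langle \P^{*}\H\P v, v\rangle = \langle \H \P v, \P v\rangle \geq 0$. Likewise, a Hermitian matrix $\M$ satisfies $\M \geq 0$ if and only if $\P\M\P^{*} \geq 0$. Combining these with the formula:
\begin{itemize}
\item If $\D f(\Delta)(\X) \geq 0$ for all $\X \in \mathbb{H}^{+}_{n}$, then $\D f(\A)(\H) = \P\,\D f(\Delta)(\P^{*}\H\P)\,\P^{*} \geq 0$.
\item Conversely, given $\X \in \mathbb{H}^{+}_{n}$, set $\H = \P\X\P^{*}$. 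Then $\D f(\Delta)(\X) = \P^{*}\D f(\A)(\H)\P \geq 0$.
\end{itemize}

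The only step needing any care is the limit passage, which requires that $\Delta + t\X$ has spectrum in $\I$ for small $t$. That step also implicitly uses that $\D f(\Delta)$ exists, which I take as given since $f$ is assumed continuously differentiable and the Fréchet derivative is defined in the Notation. There is no real obstacle beyond this.
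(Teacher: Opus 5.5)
Your proposal is correct and follows the same route as the paper: rewrite $\A + t\H = \P(\Delta + t\P^{*}\H\P)\P^{*}$, use unitary covariance of the functional calculus, and pull the conjugation by $\P$ out of the limit. You also make explicit two points the paper leaves implicit: the covariance identity $f(\P\B\P^{*}) = \P f(\B)\P^{*}$, and the ``in particular'' equivalence, which follows because conjugation by a unitary is a bijection of $\mathbb{H}^{+}_{n}$ onto itself.
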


\begin{proof}

We have
\begin{eqnarray*}
\D f (\A)(\H) & = & \lim\limits_{t \to 0} \frac{f(\A+t\H) - f(\A)}{t} = \lim\limits_{t \to 0} \frac{f(\A+t\H) - f(\A)}{t} \\ 
& = & \lim\limits_{t \to 0} \frac{f(\P\Delta\P^{*}+t\H) - f(\P\Delta\P^{*})}{t} \\
& = & \lim\limits_{t \to 0} \frac{f(\P\Delta\P^{*}+t\P\P^{*}\H\P\P^{*}) - f(\P\Delta\P^{*})}{t} \\ 
& = & \P\left(\lim\limits_{t \to 0} \frac{f(\Delta + t\P^{*}\H\P) - f(\Delta)}{t}\right)\P^{*} \\
& = & \P\D f(\Delta)(\P^{*}\H\P)\P^{*}\,.
\end{eqnarray*}

\end{proof}

\begin{corollary}

The function $f$ is operator monotone increasing if and only if, for every $n \in \mathbb{N}$, every diagonal matrix $\Delta \in \mathbb{P}_{n}$, and every $\H \in \mathbb{H}^{+}_{n}$, we have
\begin{equation*}
\D f(\Delta)(\H) \geq 0\,.
\end{equation*}

\end{corollary}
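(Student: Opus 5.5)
The plan is to combine Proposition \ref{PropositionMonotoneOne} with Lemma \ref{LemmaAtoD}, which reduces the positivity condition at an arbitrary point of $\mathbb{P}_{n}$ to the same condition at a diagonal matrix.

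For the forward direction, assume $f$ is operator monotone increasing. By Proposition \ref{PropositionMonotoneOne}, $\D f(\A)(\H) \geq 0$ for every $n$, every $\A \in \mathbb{P}_{n}$ and every $\H \in \mathbb{H}^{+}_{n}$. A diagonal matrix $\Delta \in \mathbb{P}_{n}$ is a particular element of $\mathbb{P}_{n}$, so $\D f(\Delta)(\H) \geq 0$ for all $\H \in \mathbb{H}^{+}_{n}$. Nothing further is needed here.

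For the converse, assume the diagonal condition holds for every $n$. Fix $n$, $\A \in \mathbb{P}_{n}$ and $\H \in \mathbb{H}^{+}_{n}$. By the spectral theorem recalled in the preliminaries, write $\A = \P\Delta\P^{*}$ with $\P \in \U_{n}$ and $\Delta = \diag(\lambda_{1},\ldots,\lambda_{n})$. Since $\A > 0$, every $\lambda_{i} > 0$, so $\Delta \in \mathbb{P}_{n}$. Its spectrum equals $\Spec(\A)$, so it lies in the domain of $f$. The matrix $\X := \P^{*}\H\P$ is again in $\mathbb{H}^{+}_{n}$, because $v^{*}\X v = (\P v)^{*}\H(\P v) \geq 0$ for every vector $v$. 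By hypothesis, $\D f(\Delta)(\X) \geq 0$. Lemma \ref{LemmaAtoD} then gives
\begin{equation*}
\D f(\A)(\H) = \P\,\D f(\Delta)(\X)\,\P^{*}\,.
\end{equation*}
Conjugation by the unitary $\P$ preserves positive semidefiniteness, so $\D f(\A)(\H) \geq 0$. Since $n$, $\A$ and $\H$ were arbitrary, Proposition \ref{PropositionMonotoneOne} shows that $f$ is operator monotone increasing.

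There is no genuine obstacle. The only points to check are that $\Delta$ stays in $\mathbb{P}_{n}$ with spectrum in the domain of $f$, and that unitary conjugation maps $\mathbb{H}^{+}_{n}$ onto itself, in both directions. This second fact is exactly what makes the ``in particular'' clause of Lemma \ref{LemmaAtoD} an equivalence. The standing hypothesis that $f$ is continuously differentiable, needed for Proposition \ref{PropositionMonotoneOne}, is implicit in the statement.
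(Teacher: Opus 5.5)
Your proposal is correct and matches the paper's route. The paper gives no separate proof: the corollary is the immediate combination of Proposition \ref{PropositionMonotoneOne} with the ``in particular'' clause of Lemma \ref{LemmaAtoD}, which is exactly your reduction via the spectral decomposition and the fact that unitary conjugation preserves $\mathbb{H}^{+}_{n}$.
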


\noindent We now recall the definition of the Loewner matrix associated with a differentiable function $f:(0,\infty)\to\mathbb{R}$ and a diagonal matrix $\D=\diag(\lambda_i)\in\mathbb{P}_n$ (see \cite{BHATIA}). For all $x, y \in \left(0\,, \infty\right)$, we define the first divided difference of $f$ by
\begin{equation*}
f^{[1]}(x,y) = \begin{cases} \displaystyle\frac{f(x)-f(y)}{x-y} & \text{ if } x \neq y \\  f'(x) & \text{ if } x=y
\end{cases}
\end{equation*}

\begin{definition}

For $f$ as above and $\D=\diag(\lambda_1,\ldots,\lambda_n)\in\mathbb{P}_n$, the Loewner matrix associated with $f$ and $\D$ is defined by
\[
\L_f(\D)=\left(f^{[1]}(\lambda_i,\lambda_j)\right)_{1\leq i,j\leq n}.
\]

\label{DefinitionLoewner}

\end{definition}

\noindent If the eigenvalues $\lambda_1,\ldots,\lambda_n$ are pairwise distinct, then
\begin{equation*}
\L_{f}(\D) = \begin{pmatrix} f'(\lambda_{1}) & \displaystyle\frac{f(\lambda_{1})-f(\lambda_{2})}{\lambda_{1}-\lambda_{2}} & \cdots & \displaystyle\frac{f(\lambda_{1})-f(\lambda_{n})} {\lambda_{1}-\lambda_{n}} \\ \displaystyle\frac{f(\lambda_{2})-f(\lambda_{1})}{\lambda_{2}-\lambda_{1}} & f'(\lambda_{2}) & \cdots & \displaystyle\frac{f(\lambda_{2})-f(\lambda_{n})}{\lambda_{2}-\lambda_{n}} \\ \vdots & \vdots & \ddots & \vdots \\
\displaystyle\frac{f(\lambda_{n})-f(\lambda_{1})}{\lambda_{n}-\lambda_{1}} & \displaystyle\frac{f(\lambda_{n})-f(\lambda_{2})}{\lambda_{n}-\lambda_{2}} &
\cdots & f'(\lambda_{n}) \end{pmatrix}\,.
\end{equation*}

\begin{remark}

Let $\A \in \mathbb{P}_{n}$. By the spectral theorem, there exist $\U \in \U_n$ and a diagonal matrix $\D = \diag\left(\lambda_{i}\right)$, with $\lambda_{i} > 0$, such that $\A=\U\D\U^{*}$. We will write 
\begin{equation*} 
\L_{f}(\A) := \L_{f}(\D)\,. 
\end{equation*} 
This notation depends on the ordering of the eigenvalues of $\A$. However, a different ordering only produces a simultaneous permutation of the rows and columns of $\L_{f}(\A)$. In particular, properties such as 
\begin{equation*} 
\L_{f}(\A) \geq 0 \qquad \text{ or } \qquad  \L_{f}(\A) \circ \L_{f}(\A) \geq 0 
\end{equation*} 
are independent of the ordering of the eigenvalues\,. 

\label{LoewnerMatrixA}

\end{remark}

\begin{remark}

Let $\A = \left(a_{i,j}\right)$ and $\B = \left(b_{i,j}\right)$ be two matrices in $\Mat(n)$. The Hadamard product of $\A$ and $\B$, denoted by $\A \circ \B$, is defined by
\begin{equation*}
\A \circ \B = \left(a_{i,j}b_{i,j}\right)_{i,j}\,.
\end{equation*}
We recall some elementary properties of this product. For matrices of $\Mat(n)$, the Hadamard product is commutative, associative, and
bilinear. Moreover,
\begin{equation*}
\left(\A \circ \B\right)^{*} = \A^{*} \circ \B^{*}\,,
\end{equation*}
so the Hadamard product of two Hermitian matrices is again Hermitian\,.

\noindent If $\J$ denotes the matrix whose entries are all equal to $1$, then
\begin{equation*}
\A \circ \J = \A\,,
\end{equation*}
i.e. $\J$ plays the role of the identity matrix for the Hadamard product.

\noindent It is known that if $\A \geq 0$ and $\B \geq 0$, then
\begin{equation*}
\A \circ \B \geq 0\,.
\end{equation*}
In particular,
\begin{equation*}
\A \geq 0 \qquad \Longrightarrow \qquad \A \circ \A \geq 0\,.
\end{equation*}
Note that the same conclusion holds if $\A \leq 0$, since
\begin{equation*}
\A \circ \A = \left(-\A\right) \circ \left(-\A\right) \geq 0\,.
\end{equation*}
Similarly, if $\A \geq 0$ and $\B \leq 0$, then $\A \circ \B \leq 0$\,.

\label{RemarkHadamard}

\end{remark}

\noindent The Fr\'echet derivative $f$ of a matrix function admits a nice expression in terms of the corresponding Loewner matrix $\L_{f}$. The following result is commonly referred to as the Daleckii-Krein formula\,.

\begin{theo}

Let $f: \left(0, \infty\right) \to \mathbb{R}$ be continuously differentiable and let $\A = \diag\left(\lambda_{i}\right) \in \mathbb{P}_{n}$. Then, for every $\H \in \mathbb{H}_{n}$,
\begin{equation*}
\D f(\A)(\H) = \L_{f}(\A) \circ \H\,.
\end{equation*}

\label{TheoremDifferentialLoewner}

\end{theo}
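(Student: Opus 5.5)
We prove the Daleckii-Krein formula first for polynomials, by direct computation, and then pass to general $f$ by approximation. Since $\D f(\A)$ is a linear map, it suffices to verify the identity entrywise against the matrix units. Throughout, $\A = \diag(\lambda_1,\ldots,\lambda_n)$ with $\lambda_i>0$.

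\textbf{Monomials.} Take $f(x)=x^{k}$. Expanding $(\A+t\H)^{k}$ and keeping the terms linear in $t$ gives
\begin{equation*}
\D f(\A)(\H)=\sum_{m=0}^{k-1}\A^{m}\H\A^{k-1-m}.
\end{equation*}
Its $(i,j)$ entry is $h_{ij}\sum_{m=0}^{k-1}\lambda_i^{m}\lambda_j^{k-1-m}$. This sum equals $(\lambda_i^{k}-\lambda_j^{k})/(\lambda_i-\lambda_j)$ when $\lambda_i\neq\lambda_j$, and $k\lambda_i^{k-1}$ when $\lambda_i=\lambda_j$. In both cases it is $f^{[1]}(\lambda_i,\lambda_j)$, so $\D f(\A)(\H)=\L_f(\A)\circ\H$. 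Both sides are linear in $f$, so the formula holds for every polynomial $p$.

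\textbf{General $f\in C^{1}$.} Fix a compact interval $[a,b]\subset(0,\infty)$ whose interior contains $\Spec(\A)$. By Weierstrass, choose polynomials $q_m$ with $q_m\to f'$ uniformly on $[a,b]$. Let $p_m$ be the antiderivative of $q_m$ with $p_m(\lambda_1)=f(\lambda_1)$. Then $p_m\to f$ and $p_m'\to f'$ uniformly on $[a,b]$. Writing the divided difference as
\begin{equation*}
g^{[1]}(x,y)=\int_0^1 g'\bigl(y+s(x-y)\bigr)\,ds,
\end{equation*}
we see that $p_m^{[1]}\to f^{[1]}$ uniformly on $[a,b]^2$. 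Hence $\L_{p_m}(\B)\circ\H\to\L_f(\B)\circ\H$ uniformly over Hermitian $\B$ with spectrum in $[a,b]$.

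\textbf{Main obstacle: interchanging the limit with the derivative.} We must show that $\D p_m(\A)\to\D f(\A)$, and that the limit is in fact the Fréchet derivative of $f$. The plan is to work along the segment $\B_s=\A+s\H$, $s\in[0,t]$, which for small $t$ stays in the set of Hermitian matrices with spectrum in $[a,b]$. The formula must be used at the non-diagonal matrices $\B_s$, so we first pass to a general Hermitian $\B=\U\Delta\U^*$ by unitary covariance, as in Lemma \ref{LemmaAtoD}:
\begin{equation*}
\D p(\B)(\H)=\U\bigl(\L_p(\Delta)\circ(\U^*\H\U)\bigr)\U^*.
\end{equation*}
This operator is bounded in norm by $\sup_{[a,b]^2}|p^{[1]}|\cdot C\|\H\|$ for a constant $C$ depending only on the norm. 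Therefore the maps $\D p_m(\B_s)$ are uniformly Cauchy in $m$, uniformly in $s$. The fundamental theorem of calculus gives
\begin{equation*}
p_m(\A+t\H)-p_m(\A)=\int_0^t \D p_m(\A+s\H)(\H)\,ds,
\end{equation*}
and letting $m\to\infty$ yields the same identity for $f$, with integrand $\U_s\bigl(\L_f(\Delta_s)\circ(\U_s^*\H\U_s)\bigr)\U_s^*$. This integrand is the uniform limit of continuous functions of $s$, hence continuous in $s$. Dividing by $t$ and letting $t\to0$ gives
\begin{equation*}
\D f(\A)(\H)=\L_f(\A)\circ\H.
\end{equation*}
The remaining care goes into the uniform bounds on the Hadamard multiplier and the continuity in $s$; a cleaner alternative is simply to cite \cite{BHATIA}.
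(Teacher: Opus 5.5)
Your proof is correct, but it does something the paper does not. The paper gives no argument for this theorem: it quotes it as the classical Daleckii--Krein formula, implicitly relying on \cite{BHATIA}. Your proposal supplies a self-contained proof, and your route is essentially the standard textbook one:
\begin{enumerate}
\item an exact computation for monomials;
\item uniform approximation of $f'$ by polynomials $q_m$, with the integral representation of the divided difference giving uniform convergence of the Loewner matrices;
\item the fundamental theorem of calculus along the segment $\A+s\H$, which lets you interchange the limit in $m$ with the derivative.
\end{enumerate}

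For each fixed $\H$, your argument establishes the two-sided directional limit $\lim_{t\to0}\frac{1}{t}\left(f(\A+t\H)-f(\A)\right)$. This is exactly how the paper's Notation defines $\D f(\A)(\H)$, and it is all the paper uses later. For example, Proposition \ref{PropositionSeptember20V1} needs only the limit $t\to0^{+}$, and the linearity of $\D f(\A)$ then follows from the formula itself.

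If you want the genuinely Fr\'echet statement that the paper's terminology suggests, your own estimate already gives it. The bound $\left\|\D(p_m-p_l)(\B)\right\|\leq\sup_{[a,b]}|q_m-q_l|$, in the Frobenius operator norm, holds uniformly over all Hermitian $\B$ with spectrum in $[a,b]$. Each map $\B\mapsto\D p_m(\B)$ is polynomial in $\B$, so the limit operator $\Phi(\B)$ depends continuously on $\B$. Hence $f(\A+\H)-f(\A)-\Phi(\A)(\H)=\int_0^1\left(\Phi(\A+s\H)-\Phi(\A)\right)(\H)\,ds=o(\|\H\|_{\F})$.

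Finally, drop your opening sentence about reducing to matrix units via linearity of $\D f(\A)$. It presupposes the differentiability being proved, and the rest of your argument never uses it.
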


\begin{remark}

More generally, if $\A = \U\Delta\U^{*}$, with $\U \in \U_{n}$ and $\Delta = \diag\left(\lambda_{i}\right), \lambda_{i} > 0$, then it follows from Lemma \ref{LemmaAtoD} and Remark \ref{LoewnerMatrixA} that
\begin{equation*}
\D f(\A)(\H) = \U\left[\L_{f}(\A) \circ \left(\U^{*}\H\U\right)\right]\U^{*}\,.
\end{equation*}

\end{remark}

\noindent Combining the characterization of operator monotonicity given in Proposition \ref{PropositionMonotoneOne} with Theorem \ref{TheoremDifferentialLoewner} and Remark \ref{RemarkHadamard} gives the following characterization in terms of the Loewner matrix\,.

\begin{theo}

Let $\I\subseteq\mathbb{R}$ be an interval and let $f: \I \longrightarrow \mathbb{R}$ be continuously differentiable. Then $f$ is operator monotone increasing if and only if, for every $n \geq 1$ and every $\A \in \mathbb{P}_{n}$ such that $\Spec(\A) \subseteq \I$,
\begin{equation*}
\L_{f}(\A) \geq 0\,.
\end{equation*}
Similarly, $f$ is operator monotone decreasing if and only if, for every $n \geq 1$ and every $\A \in \mathbb{P}_{n}$ (with $\Spec(\A) \subseteq \I$),
\begin{equation*}
\L_{f}(\A) \leq 0\,.
\end{equation*}

\label{TheoremLoewner}

\end{theo}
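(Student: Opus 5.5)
The plan is to obtain Theorem \ref{TheoremLoewner} by chaining three earlier results. Proposition \ref{PropositionMonotoneOne} turns operator monotonicity into positivity of the Fr\'echet derivative. Lemma \ref{LemmaAtoD} reduces that positivity to diagonal matrices. Theorem \ref{TheoremDifferentialLoewner} (Daleckii--Krein) then rewrites the derivative as a Hadamard product with the Loewner matrix. The decreasing case follows by applying the increasing case to $-f$. This works because $\L_{-f}(\A) = -\L_f(\A)$, since divided differences are linear in $f$.

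\emph{Step 1 (reduction).} By Proposition \ref{PropositionMonotoneOne}, $f$ is operator monotone increasing iff $\D f(\A)(\H)\geq 0$ for all $n$, all $\A\in\mathbb{P}_n$ with $\Spec(\A)\subseteq\I$, and all $\H\in\mathbb{H}^+_n$. Writing $\A=\U\Delta\U^*$, Lemma \ref{LemmaAtoD} shows this is equivalent to $\D f(\Delta)(\X)\geq 0$ for all $\X\in\mathbb{H}^+_n$. Theorem \ref{TheoremDifferentialLoewner} gives $\D f(\Delta)(\X)=\L_f(\Delta)\circ\X$. By Remark \ref{LoewnerMatrixA}, $\L_f(\A)=\L_f(\Delta)$, and positivity does not depend on the eigenvalue ordering. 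So everything reduces to the following statement for a Hermitian matrix $\L$ (here $\L_f(\Delta)$ is real symmetric because $f^{[1]}$ is symmetric):
\begin{equation*}
\L\geq 0 \iff \L\circ\X\geq 0 \text{ for all } \X\in\mathbb{H}^+_n .
\end{equation*}

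\emph{Step 2 (the Hadamard equivalence).} The forward direction is the Schur product theorem recalled in Remark \ref{RemarkHadamard}. For the converse, take $\X=\J$, which equals $vv^*$ with $v=(1,\ldots,1)^T$ and so lies in $\mathbb{H}^+_n$. Then $\L\circ\J=\L\geq 0$.

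\emph{Step 3 (decreasing case).} The function $f$ is operator monotone decreasing iff $-f$ is increasing. By the above, this holds iff $\L_{-f}(\A)=-\L_f(\A)\geq 0$ for all admissible $\A$, i.e. iff $\L_f(\A)\leq 0$.

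The only potential obstacle is bookkeeping. Lemma \ref{LemmaAtoD} is stated for $\A\in\mathbb{P}_n$ and functions on $(0,\infty)$, whereas here $f$ lives on a general interval $\I$. I would note that its proof uses only the functional calculus on matrices with spectrum in $\I$, so it applies verbatim. The real analytic content sits in the cited Proposition \ref{PropositionMonotoneOne}, which we are allowed to assume.
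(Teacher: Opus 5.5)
Your proposal is correct and follows essentially the same route as the paper. The paper obtains the theorem in one line by combining Proposition~\ref{PropositionMonotoneOne}, the Daleckii--Krein formula (Theorem~\ref{TheoremDifferentialLoewner}) and the Schur product property of Remark~\ref{RemarkHadamard}; you additionally spell out the converse via $\X=\J$ (the same trick the paper uses in Proposition~\ref{PropositionSeptember20V2}) and the reduction of the decreasing case to $-f$. One bookkeeping note: the paper states the Daleckii--Krein formula, like Lemma~\ref{LemmaAtoD}, only for functions on $(0,\infty)$, so your remark about extending to a general interval $\I$ should cover that formula too.
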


\noindent The previous characterization of operator monotone functions requires the positivity of the Loewner matrix $\L_{f}(\A)$ for every matrix size $n\geq 1$. A natural weaker notion arises by fixing the dimension and requiring the monotonicity condition only for matrices of a given size (introduced in \cite{BENDATSHERMAN}). In particular, we get the following definition. 

\begin{definition}

Let $n \geq 1$ and let $f: \I \longrightarrow \mathbb{R}$ be a function defined on an interval $\I \subseteq \mathbb{R}$. We say that $f$ is matrix monotone increasing of degree $n$ if, for every $\A, \B \in \mathbb{H}_{n}$ with spectrum contained in $\I$,
\begin{equation*}
\A \leq \B \qquad \Longrightarrow \qquad f(\A) \leq f(\B)\,.
\end{equation*}
Similarly, $f$ is said to be matrix monotone decreasing of degree $n$ if
\begin{equation*}
\A \leq\B \qquad \Longrightarrow \qquad f(\A)\geq f(\B)\,.
\end{equation*}

\end{definition}
When no confusion can arise, we say that $f$ is matrix monotone of degree $n$
if it is matrix monotone in either of these two directions.
\begin{proposition}

Let $\I\subseteq\mathbb{R}$ be an interval and let $f:\I\to\mathbb{R}$ be matrix monotone of degree $n$. Then $f$ is matrix monotone of degree $m$ for every $1\leq m\leq n$.
\label{MonotoneFromNtoM}

\end{proposition}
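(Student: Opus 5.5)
The plan is a direct embedding argument, with no derivatives needed. Fix $1 \leq m \leq n$ and suppose $f$ is matrix monotone increasing of degree $n$; the decreasing case follows by applying the same argument to $-f$. Let $\A, \B \in \mathbb{H}_{m}$ with spectra in $\I$ and $\A \leq \B$. Choose any point $c \in \I$, which exists since $\I$ is a nonempty interval. Form the block diagonal matrices
\begin{equation*}
\widetilde{\A} := \begin{pmatrix} \A & 0 \\ 0 & c\,\Id_{n-m} \end{pmatrix}\,, \qquad \widetilde{\B} := \begin{pmatrix} \B & 0 \\ 0 & c\,\Id_{n-m} \end{pmatrix} \in \mathbb{H}_{n}\,.
\end{equation*}
When $m = n$ there is nothing to prove, so assume $m < n$.

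The key steps are as follows.
\begin{enumerate}[(i)]
\item The spectra satisfy $\Spec(\widetilde{\A}) = \Spec(\A)\cup\{c\} \subseteq \I$, and similarly $\Spec(\widetilde{\B}) \subseteq \I$.
\item We have $\widetilde{\B}-\widetilde{\A} = \diag(\B-\A, 0)$, which is positive semidefinite. This holds because $v^{*}(\widetilde{\B}-\widetilde{\A})v = v_{1}^{*}(\B-\A)v_{1} \geq 0$, where $v_{1}$ denotes the first $m$ coordinates of $v$. Hence $\widetilde{\A} \leq \widetilde{\B}$.
\item The functional calculus respects block diagonal structure, so
\begin{equation*}
f(\widetilde{\A}) = \diag\left(f(\A), f(c)\Id_{n-m}\right)\,.
\end{equation*}
To see this, diagonalize $\A = \U\Lambda\U^{*}$ and use the unitary $\diag(\U, \Id_{n-m})$ together with the independence of $f(\cdot)$ from the choice of diagonalization.
\end{enumerate}

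Degree-$n$ monotonicity then gives $f(\widetilde{\A}) \leq f(\widetilde{\B})$, that is,
\begin{equation*}
\diag\left(f(\B)-f(\A), 0\right) \geq 0\,.
\end{equation*}
Compressing to the first $m$ coordinates, i.e. testing against vectors of the form $(v_{1},0)$, yields $f(\A) \leq f(\B)$. This is the required degree-$m$ monotonicity.

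There is no serious obstacle here. The only point requiring care is step (iii), the compatibility of the functional calculus with direct sums, which follows from the explicit unitary diagonalization above.
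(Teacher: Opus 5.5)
Your proof is correct and follows the same route as the paper. You pad $\A$ and $\B$ with a block $c\,\Id_{n-m}$ for some $c \in \I$, apply degree-$n$ monotonicity, use that the functional calculus respects direct sums, and compress back to the first $m$ coordinates. You also spell out the spectrum inclusion and the block-diagonal functional calculus, which the paper leaves implicit.
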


\begin{proof}
Without loss of generality, assume that $f$ is matrix monotone increasing; otherwise, replace $f$ by $-f$.
Let $1\leq m<n$, and let $\A,\B\in\mathbb{H}_m$ have spectra contained in $\I$ and satisfy $\A\leq\B$. Choose $\alpha\in\I$. We denote by $\widetilde{\A}$ and $\widetilde{\B}$ the matrices in $\Mat(n)$ given by
\begin{equation*}
\widetilde{\A} := \A \oplus \alpha \Id_{n-m} = \begin{pmatrix} \A & 0 \\ 0 & \alpha\Id_{n-m} \end{pmatrix}\,,  \qquad \widetilde{\B} := \B \oplus \alpha \Id_{n-m}\,.
\end{equation*}
We have $\widetilde{\A}\,, \widetilde{\B} \in \mathbb{H}_{n}$ and
\begin{equation*}
\widetilde{\B} - \widetilde{\A} = \begin{pmatrix} \B - \A & 0 \\ 0 & 0 \end{pmatrix}\,,
\end{equation*}
i.e. $\widetilde{\A} \leq \widetilde{\B}$. Since $f$ is matrix monotone of degree $n$, it follows that
\begin{equation*}
f(\widetilde{\A}) \leq f(\widetilde{\B})\,.
\end{equation*}
Moreover, it follows from functional calculus that
\begin{equation*}
f(\widetilde{\A}) = f(\A)\oplus f(\alpha)\Id_{n-m} \qquad \text{ and } \qquad f(\widetilde{\B}) = f(\B) \oplus f(\alpha)\Id_{n-m}\,.
\end{equation*}
Therefore,
\begin{equation*}
f(\A) \oplus f(\alpha)\Id_{n-m} \le	 f(\B) \oplus f(\alpha)\Id_{n-m}\,,
\end{equation*}
which implies
\begin{equation*}
f(\A) \leq f(\B)\,.
\end{equation*}
Hence $f$ is matrix monotone of degree $m$\,.

\end{proof}

\begin{remark}

We have a natural analogue of Proposition \ref{PropositionMonotoneOne} and Theorem \ref{TheoremLoewner} for matrix monotone $f$ of degree $n$. Indeed, a continuously differentiable function $f: \I \to \mathbb{R}$ is matrix monotone increasing of degree $n$ if and only if for every $\A \in \mathbb{P}_{n}$ with $\Spec(\A) \subseteq \I$, and every $\H \in \mathbb{H}^{+}_{n}$, 
\begin{equation*}
\D f(\A)(\H) \geq 0\,.
\end{equation*}
In particular, $f$ is matrix monotone increasing of degree $n$ if and only if $\L_{f}(\A) \geq 0$ for all $\A \in \mathbb{P}_{n}$\,.

\label{RemarkMatrixMonotone}

\end{remark}

\noindent However, the previous distinction is genuine: a function may preserve the Loewner order for matrices of degree $n$, but not $n+1$. The following example illustrates this phenomenon and motivates the notion of matrix monotonicity of finite degree (see \cite{HANSENJITOMIYAMA})\,.

\begin{example}

Consider the function
\begin{equation*}
f_{2}(x) = x + \frac{x^{3}}{3}\,.
\end{equation*}
Since
\begin{equation*}
f'_{2}(x)=1+x^{2}>0
\end{equation*}
for every $x>0$, the function $f_{2}$ is strictly increasing, and hence matrix monotone of degree $1$ on every interval contained in $\left(0\,, \infty\right)$.

\noindent For all $\D = \diag\left(\lambda_{1}\,, \lambda_{2}\right)$, with $\lambda_{1} \neq \lambda_{2}$, we have
\begin{equation*}
\Omega(\lambda) := \frac{f_{2}(\lambda_{1})-f_{2}(\lambda_{2})} {\lambda_{1}-\lambda_{2}} = 1+\frac{\lambda_{1}^{2}+\lambda_{1}\lambda_{2} +\lambda_{2}^{2}}{3}\,.
\end{equation*}
Thus the determinant of the corresponding Loewner matrix $\L_{f_{2}}(\D)$ is
\begin{eqnarray*}
f'_{2}(\lambda_{1})f'_{2}(\lambda_{2})-\Omega(\lambda)^{2} & = & \left(1+\lambda_{1}^{2}\right)\left(1+\lambda_{2}^{2}\right) - \left(1+\frac{\lambda_{1}^{2}+\lambda_{1}\lambda_{2}+\lambda_{2}^{2}}{3} \right)^{2} \\
& = & -\frac{\left(\lambda_{1}-\lambda_{2}\right)^{2}\left(\lambda_{1}^{2} + 4\lambda_{1}\lambda_{2}+\lambda_{2}^{2}-3\right)}{9}\,.
\end{eqnarray*}
In particular, the Loewner matrix $\L_{f_{2}}(\D)$ is positive semidefinite when
\begin{equation*}
\lambda_{1}^{2} + 4\lambda_{1}\lambda_{2} + \lambda_{2}^{2} \leq 3\,,
\end{equation*}
i.e. this holds for every 
\begin{equation*}
0 < \lambda_{1}\,,\lambda_{2} < \frac{1}{\sqrt{2}}\,.
\end{equation*}
Therefore $f_{2}$ is matrix monotone of degree $2$ on $\I = \left(0\,, \frac{1}{\sqrt{2}}\right)$. Using the Hankel matrix, the authors proved in \cite{HANSENJITOMIYAMA} that $f_{2}$ is not matrix monotone of degree 3. 

\noindent Indeed, assume that $f_{2}$ is matrix monotone of degree $3$ on an interval $\I$. Then for every $x \in \I$, the associated Hankel matrix
\begin{equation*}
\H_{3}(f_{2}, x) := \left(\frac{f^{(i+j-1)}_{2}(x)}{(i+j-1)!}\right)_{1\leq i, j \leq 3}
\end{equation*}
must be positive semidefinite. For $f_{2}(x)=x+\frac{x^{3}}{3}$, we have
\begin{equation*}
f'_{2}(x)=1+x^{2}\,, \qquad f''_{2}(x)=2x\,, \qquad f'''_{2}(x)=2\,, \qquad f^{(4)}_{2}(x) = f^{(5)}_{2}(x) = 0\,, 
\end{equation*}
so
\begin{equation*}
\H_{3}(f_{2}, x) = \begin{pmatrix} 1+x^{2} & x & \frac{1}{3} \\ x & \frac{1}{3} & 0 \\ \frac{1}{3} & 0 & 0 \end{pmatrix}\,.
\end{equation*}
A direct computation gives
\begin{equation*}
\det \H_{3}(f_{2}, x) = -\frac{1}{27} < 0\,.
\end{equation*}
Therefore $\H_{3}(f_{2}, x)$ is not positive semidefinite. In particular, composing $f_{2}$ with the operator monotone function
\begin{equation*}
\varphi(x) = \frac{1}{\sqrt{2}}\frac{x}{x+1}\,,
\end{equation*}
we obtain a function $\widetilde{f}_{2} = f_{2} \circ \varphi$ which is matrix monotone of degree $2$ on $\left(0, \infty\right)$, but not of degree $3$.

\noindent Similarly, one can show that the function 
\begin{equation*}
f_{3}(x) = x + \frac{x^{3}}{3} + \frac{x^{5}}{5}
\end{equation*}
is matrix monotone of degree $3$ on $\left(0, 0.2\right)$, so the function $\widetilde{f}_{3} = f_{3} \circ \widetilde{\varphi}$ is matrix monotone of degree $3$ on $\left(0, \infty\right)$, but not of degree $4$, with $\widetilde{\varphi} = 0.2\sqrt{2}\varphi$\,.

\label{ExampleMonotone}

\end{example}

\section{Important Examples of Radial Convexity}

\label{SectionMtoRC}

\noindent In this section, we present several examples of metrics on the cone $\mathbb{P}_{n}$ that are radially convex with respect to the Loewner order, namely the Thompson metric, the standard Riemannian metric, and the Bures-Wasserstein metric. We start with the Thompson metric \cite{Thompson1963}. 

\begin{definition}[Thompson Metric]
Let $\A\,, \B \in \mathbb{P}_{n}$. We define
\begin{equation*}
\M(\A/\B) = \inf\left\{\alpha>0\,, \A \leq \alpha \B\right\}\,.
\end{equation*}
The Thompson metric on $\mathbb{P}_{n}$ is defined by
\begin{equation*}
\d_{\infty}(\A,\B) := \log\max\left\{\M(\A/\B)\,,\M(\B/\A)\right\}\,.
\end{equation*}

\end{definition}

\begin{remark}

The Thompson metric is naturally related to the Loewner order. Indeed, for $\A, \B \in \mathbb{P}_{n}$, one has
\begin{equation*}
\M(\A/\B) = \lambda_{\max}\left(\B^{-\frac{1}{2}}\A\B^{-\frac{1}{2}}\right)\,.
\end{equation*}
Therefore,
\begin{equation*}
\d_{\infty}(\A,\B) = \max\left\{\log\lambda_{\max}\left(\B^{-\frac{1}{2}}\A\B^{-\frac{1}{2}}\right)\,, \log\lambda_{\max}\left(\A^{-\frac{1}{2}}\B\A^{-\frac{1}{2}}\right)\right\}\,.
\end{equation*}
Equivalently,
\begin{equation*}
\d_{\infty}(\A, \B) = \left\|\log\left(\A^{-\frac{1}{2}}\B\A^{-\frac{1}{2}}\right)\right\|_{\infty}\,,
\end{equation*}
where $\left\| \cdot \right\|_{\infty}$ denotes the operator norm. Moreover, $\d_{\infty}$ satisfies
\begin{equation*}
\d_{\infty}\left(\S\A\S^{*}, \S\B\S^{*} \right) = \d_{\infty}(\A,\B)\,, \qquad \d_{\infty}(\alpha\A, \alpha\B) = \d_{\infty}(\A,\B)\,,
\end{equation*}
for any invertible matrix $\S$ and $\alpha > 0$.

\noindent Finally, for $r\geq0$,
\begin{equation*}
\d_{\infty}(\A,\B) \leq r \qquad \text{ if and only if } \qquad  e^{-r}\A \leq \B \leq e^{r}\A\,.
\end{equation*}

\label{RemarkThompson}

\end{remark}

\begin{proposition} 
The Thompson metric is radially convex with respect to the Loewner order on
$\mathbb{P}_{n}$\,.

\end{proposition}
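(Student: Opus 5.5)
Let $\A \leq \C \leq \B$ in $\mathbb{P}_{n}$. We must show $\d_{\infty}(\A,\C) \leq \d_{\infty}(\A,\B)$ and $\d_{\infty}(\C,\B) \leq \d_{\infty}(\A,\B)$. Set $r := \d_{\infty}(\A,\B)$. We will use the characterization in Remark \ref{RemarkThompson}: $\d_{\infty}(\X,\Y) \leq s$ if and only if $e^{-s}\X \leq \Y \leq e^{s}\X$. Thus it suffices to prove the two sandwiches $e^{-r}\A \leq \C \leq e^{r}\A$ and $e^{-r}\B \leq \C \leq e^{r}\B$.

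From $\d_{\infty}(\A,\B) = r$ we get $e^{-r}\A \leq \B \leq e^{r}\A$, and by symmetry of the metric also $e^{-r}\B \leq \A \leq e^{r}\B$. Note that $r \geq 0$, so $e^{-r} \leq 1 \leq e^{r}$.

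For the first sandwich, use $\A > 0$ and $r \geq 0$ to get $e^{-r}\A \leq \A \leq \C$. For the upper bound, $\C \leq \B \leq e^{r}\A$. Hence $\d_{\infty}(\A,\C) \leq r$.

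For the second sandwich, the lower bound is $e^{-r}\B \leq \A \leq \C$. The upper bound is $\C \leq \B \leq e^{r}\B$, using $\B > 0$. Hence $\d_{\infty}(\C,\B) \leq r$.

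Each step is only transitivity of the Loewner order combined with the characterization of Thompson balls, so no step is a real obstacle. The one point needing care is that the bounds $e^{-r}\A \leq \A$ and $\B \leq e^{r}\B$ rely on positivity of $\A$ and $\B$ together with $r \geq 0$. If one wants to avoid citing the last line of Remark \ref{RemarkThompson}, it follows directly from the definition of $\M(\cdot/\cdot)$: $\X \leq e^{s}\Y$ means exactly $\M(\X/\Y) \leq e^{s}$, since the set of admissible $\alpha$ is closed.
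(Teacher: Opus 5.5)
Your proof is correct and follows essentially the same route as the paper: both reduce the claim, via the ball characterization in Remark \ref{RemarkThompson}, to the two Loewner sandwiches $e^{-r}\A \leq \C \leq e^{r}\A$ and $e^{-r}\B \leq \C \leq e^{r}\B$, each obtained by transitivity. The only cosmetic difference is that you obtain $e^{-r}\B \leq \A$ from the symmetry of the metric, whereas the paper gets it by rescaling $\B \leq e^{r}\A$.
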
 

\begin{proof} 

Let $\A\,, \B\,, \C \in \mathbb{P}_{n}$ such that $\A \leq \C \leq \B$,  and set 
\begin{equation*} 
r = \d_{\infty}(\A, \B)\,. 
\end{equation*} 
It follows from Remark \ref{RemarkThompson} that 
\begin{equation*} 
e^{-r}\A \leq \B \leq e^{r}\A\,. 
\end{equation*} 
Since $\A \leq \C \leq \B$, we obtain 
\begin{equation*} 
e^{-r}\A \leq \A \leq \C \leq \B \leq e^{r}\A\,. 
\end{equation*} 
Therefore, 
\begin{equation*} 
e^{-r}\A \leq \C \leq e^{r}\A\,, 
\end{equation*} 
and Remark \ref{RemarkThompson} implies that 
\begin{equation*} 
\d_{\infty}(\A, \C) \leq r\,. 
\end{equation*} 
Moreover, since $\B\leq e^{r}\A$, we have 
\begin{equation*} 
e^{-r}\B \leq \A \leq \C\,. 
\end{equation*} 
Together with $\C \leq \B$, this gives 
\begin{equation*} 
e^{-r}\B \leq \C \leq \B \leq e^{r}\B\,. 
\end{equation*} 
Hence, once again by Remark \ref{RemarkThompson}, 
\begin{equation*} 
\d_{\infty}(\C,\B) \leq r\,. 
\end{equation*} 
Finally,
\begin{equation*} 
\d_{\infty}(\A,\B) \geq \max\left\{ \d_{\infty}(\A,\C)\,,\d_{\infty}(\C,\B) \right\}\,, 
\end{equation*} 
and the Thompson metric is radially convex with respect to the Loewner order.

\end{proof}

\noindent Similarly, the standard Riemannian metric on $\mathbb{P}_{n}$ is radially convex with respect to the Loewner order. We state this result without proof, as it follows from the operator monotonicity of the logarithm and the monotonicity of the Frobenius norm.

\begin{definition}

Let $\A\,, \B \in \mathbb{P}_{n}$. The standard Riemannian metric on $\mathbb{P}_{n}$ is defined by
\begin{equation*}
\d_{\R}(\A,\B) := \left\|\log\left(\A^{-\frac{1}{2}}\B\A^{-\frac{1}{2}}\right)\right\|_{2}\,,
\end{equation*}
where 
$\left\|\cdot\right\|_{2}$ denotes the Schatten $2$-norm (or Frobenius), i.e.
\begin{equation}
\|\A\|_{\F}:=\left\|\A\right\|_{2} = \left(\tr\left(\A^{*}\A\right)\right)^{\frac{1}{2}} = \left(\sum\limits_{i,j=1}^{n}\left|a_{i,j}\right|^{2}\right)^{\frac{1}{2}}\,, \qquad \left(\A = \left(a_{i,j}\right) \in \Mat(n)\right)\,.
\label{DefinitionFrobenius}
\end{equation}

\end{definition}

\begin{proposition}
The standard Riemannian metric on $\mathbb{P}_{n}$ is radially convex with respect to the Loewner order\,.
\end{proposition}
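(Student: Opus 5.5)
The plan is to use congruence invariance of $\d_{\R}$ to move one endpoint to the identity, and then combine the operator monotonicity of $\log$ with the monotonicity of the Frobenius norm on $\mathbb{H}^{+}_{n}$.

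First I will record two elementary properties of $\d_{\R}$, both of which come from the spectrum of $\A^{-1}\B$.
\begin{itemize}
\item[(i)] The matrix $\A^{-\frac12}\B\A^{-\frac12}$ is positive definite and similar to $\A^{-1}\B$. Since $\left\|\log \X\right\|_{2}^{2}=\sum_{j}(\log\lambda_{j}(\X))^{2}$ for $\X\in\mathbb{P}_{n}$, we get $\d_{\R}(\A,\B)^{2}=\sum_{j}\left(\log\lambda_{j}(\A^{-1}\B)\right)^{2}$.
\item[(ii)] For invertible $\S$, the matrix $(\S\A\S^{*})^{-1}\S\B\S^{*}=\S^{-*}\A^{-1}\B\S^{*}$ is similar to $\A^{-1}\B$. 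Hence $\d_{\R}(\S\A\S^{*},\S\B\S^{*})=\d_{\R}(\A,\B)$.
\item[(iii)] The eigenvalues of $\B^{-1}\A$ are the reciprocals of those of $\A^{-1}\B$, and $\log$ changes sign under inversion. Hence $\d_{\R}(\A,\B)=\d_{\R}(\B,\A)$.
\end{itemize}

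I will also use the following monotonicity fact. If $0\leq \X\leq \Y$, then $\left\|\X\right\|_{2}\leq\left\|\Y\right\|_{2}$. This holds because, by Weyl's monotonicity principle (Courant--Fischer), the ordered eigenvalues satisfy $0\leq\lambda_{j}(\X)\leq\lambda_{j}(\Y)$ for each $j$, and $\left\|\X\right\|_{2}^{2}=\sum_{j}\lambda_{j}(\X)^{2}$.

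Now let $\A\leq\C\leq\B$ in $\mathbb{P}_{n}$.

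For the first inequality, apply (ii) with $\S=\A^{-\frac12}$ and set $\C'=\A^{-\frac12}\C\A^{-\frac12}$ and $\B'=\A^{-\frac12}\B\A^{-\frac12}$. Congruence preserves the Loewner order, so $\Id_{n}\leq\C'\leq\B'$. By operator monotonicity of $\log$, this gives $0\leq\log\C'\leq\log\B'$. The monotonicity fact then yields
\begin{equation*}
\d_{\R}(\A,\C)=\left\|\log\C'\right\|_{2}\leq\left\|\log\B'\right\|_{2}=\d_{\R}(\A,\B)\,.
\end{equation*}

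For the second inequality, apply (ii) with $\S=\B^{-\frac12}$ and set $\A''=\B^{-\frac12}\A\B^{-\frac12}$ and $\C''=\B^{-\frac12}\C\B^{-\frac12}$. Then $\A''\leq\C''\leq\Id_{n}$. Operator monotonicity of $\log$ gives $\log\A''\leq\log\C''\leq0$, that is, $0\leq-\log\C''\leq-\log\A''$. Using the symmetry (iii) to write $\d_{\R}(\C,\B)=\d_{\R}(\B,\C)$ and $\d_{\R}(\A,\B)=\d_{\R}(\B,\A)$, we obtain
\begin{equation*}
\d_{\R}(\C,\B)=\left\|\log\C''\right\|_{2}\leq\left\|\log\A''\right\|_{2}=\d_{\R}(\A,\B)\,.
\end{equation*}

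The main point needing care is the monotonicity of the Frobenius norm. Operator monotonicity of $\log$ by itself only gives a matrix inequality, and converting it into a norm inequality requires both matrices to be positive semidefinite. That is why the endpoint is normalized to $\Id_{n}$ in each case, so that the relevant logarithms are of one sign. In the second case this forces the use of $-\log$ together with the symmetry (iii).
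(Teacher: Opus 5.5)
Your proof is correct and follows the route the paper indicates: the paper states this proposition without proof, citing only the operator monotonicity of $\log$ and the monotonicity of the Frobenius norm. Your congruence normalization of an endpoint to $\Id_{n}$, combined with the symmetry of $\d_{\R}$, is what makes that sketch rigorous, since it ensures the logarithms being compared have one sign so that monotonicity of the norm applies.
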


From this point on, we write $\|\cdot\|_{\F}$ for the Frobenius norm, while $\|\cdot\|$ denotes an arbitrary unitarily invariant norm unless stated otherwise.

\begin{definition}[Bures-Wasserstein Metric \cite{BhatiaJainLim2019}]

Let $\A, \B \in \mathbb{P}_{n}$. The Bures-Wasserstein metric on $\mathbb{P}_{n}$ is defined by
\begin{equation*}
\d_{\B\W}(\A,\B) := \left[\tr(\A)+\tr(\B) - 2\tr\left(\A^{\frac{1}{2}}\B\A^{\frac{1}{2}}\right)^{\frac{1}{2}}\right]^{\frac{1}{2}}\,.
\end{equation*}

\end{definition}

\begin{remark}

We recall some properties of the Bures-Wasserstein metric that will be useful in what follows. First, $\d_{\B\W}$ is symmetric, i.e.
\begin{equation*}
\d_{\B\W}(\A,\B) = \d_{\B\W}(\B,\A) \qquad \left(\A\,, \B \in \mathbb{P}_{n}\right)\,.
\end{equation*}
Moreover, the Bures-Wasserstein metric admits the following representation
\begin{equation*}
\d^{2}_{\B\W}(\A,\X) = \tr(\A) + \max\limits_{\Y>0}\left(\tr\left[\X\left(\Id_{n}-\Y^{-1}\right)-\A\Y\right]\right)\,.
\end{equation*}
For fixed $\A \in \mathbb{P}_{n}$, the maximum is attained at
\begin{equation*}
\Y_{\X} = \A^{-\frac{1}{2}}\left(\A^{\frac{1}{2}}\X\A^{\frac{1}{2}}\right)^{\frac{1}{2}}\A^{-\frac{1}{2}}\,.
\end{equation*}
In particular, the operator monotonicity of the square root gives
\begin{equation*}
\X \geq \A \qquad \Longrightarrow \qquad \Y_{\X} \geq \Id_{n} \qquad \Longrightarrow \qquad \Id_{n} - \Y^{-1}_{\X} \geq 0\,,
\end{equation*}
while
\begin{equation*}
\X \leq \A \qquad \Longrightarrow \qquad \Y_{\X} \leq \Id_{n} \qquad \Longrightarrow \qquad \Id_{n} - \Y^{-1}_{\X} \leq 0\,.
\end{equation*}

\label{RemarkBuresWasserstein}

\end{remark}

\begin{proposition}

The Bures-Wasserstein metric on $\mathbb{P}_{n}$ is radially convex with respect to the Loewner order\,.

\end{proposition}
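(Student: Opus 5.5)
Given $\A \leq \C \leq \B$ in $\mathbb{P}_{n}$, I will prove $\d_{\B\W}(\A,\C) \leq \d_{\B\W}(\A,\B)$; the second inequality $\d_{\B\W}(\C,\B) \leq \d_{\B\W}(\A,\B)$ then follows by symmetry, as explained at the end. I work with the squared metric, which is allowed by the remark following the definition of radial convexity. Fix $\A$ and set
\begin{equation*}
\Phi(\X,\Y) := \tr(\A) + \tr\left[\X\left(\Id_{n}-\Y^{-1}\right) - \A\Y\right]\,,
\end{equation*}
so that Remark \ref{RemarkBuresWasserstein} gives $\d^{2}_{\B\W}(\A,\X) = \max_{\Y>0}\Phi(\X,\Y) = \Phi(\X,\Y_{\X})$.

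The key step is a one-line comparison. Since the maximum is attained at $\Y_{\C}$,
\begin{equation*}
\d^{2}_{\B\W}(\A,\C) = \Phi(\C,\Y_{\C}) = \Phi(\B,\Y_{\C}) - \tr\left[(\B-\C)\left(\Id_{n}-\Y_{\C}^{-1}\right)\right] \leq \d^{2}_{\B\W}(\A,\B) - \tr\left[(\B-\C)\left(\Id_{n}-\Y_{\C}^{-1}\right)\right]\,.
\end{equation*}
Here I used that $\Phi(\B,\Y_{\C}) \leq \max_{\Y>0}\Phi(\B,\Y)$. It therefore suffices to show that the subtracted trace is nonnegative.

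Because $\C \geq \A$, Remark \ref{RemarkBuresWasserstein} gives $\Id_{n} - \Y_{\C}^{-1} \geq 0$. We also have $\B - \C \geq 0$. The trace of a product of two positive semidefinite matrices is nonnegative, since
\begin{equation*}
\tr(\P\Q) = \tr\left(\P^{\frac{1}{2}}\Q\P^{\frac{1}{2}}\right) \geq 0 \qquad \left(\P\,,\Q \in \mathbb{H}^{+}_{n}\right)\,.
\end{equation*}
Hence $\tr\left[(\B-\C)\left(\Id_{n}-\Y_{\C}^{-1}\right)\right] \geq 0$, and so $\d_{\B\W}(\A,\C) \leq \d_{\B\W}(\A,\B)$.

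For the second inequality I fix $\B$ as the base point instead of $\A$. Using the symmetry of $\d_{\B\W}$, I apply the same variational formula with $\B$ in place of $\A$. Let $\Y'_{\C}$ denote the maximizer for $\d^{2}_{\B\W}(\B,\C)$. Since $\C \leq \B$, Remark \ref{RemarkBuresWasserstein} gives $\Id_{n} - \Y_{\C}'^{-1} \leq 0$. The same comparison yields
\begin{equation*}
\d^{2}_{\B\W}(\B,\C) \leq \d^{2}_{\B\W}(\B,\A) - \tr\left[(\A-\C)\left(\Id_{n}-\Y_{\C}'^{-1}\right)\right]\,.
\end{equation*}
Now $\A - \C \leq 0$ and $\Id_{n} - \Y_{\C}'^{-1} \leq 0$, so this trace is the trace of a product of two negative semidefinite matrices. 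It equals the trace of the product of their negatives, which are positive semidefinite, and is therefore nonnegative. Hence $\d_{\B\W}(\C,\B) \leq \d_{\B\W}(\A,\B)$.

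The only delicate point is the sign bookkeeping that decides which matrix plays the role of the base point. It is essential to evaluate at the maximizer of the \emph{smaller} distance. Doing so uses only the maximality for the larger distance, and no differentiability or regularity of the maximizer in $\X$ is needed.
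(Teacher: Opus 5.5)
Your proof is correct and follows the paper's argument. You evaluate the variational formula at the maximizer $\Y_{\C}$ for the smaller distance, then combine $\Id_{n} - \Y_{\C}^{-1} \geq 0$ (from $\C \geq \A$) with $\B - \C \geq 0$, and repeat the argument with $\B$ as base point. Your explicit treatment of the second inequality, where both factors are negative semidefinite, spells out the step that the paper only indicates by ``similarly.''
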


\begin{proof}

Let $\A, \B, \C \in \mathbb{P}_{n}$ such that $\A \leq \C \leq \B$. To simplify the notation, for fixed $\Omega\in\mathbb{P}_n$, define
\begin{equation*}
g_{\Omega}(\X) := \max_{\Y>0}\left(\tr\left[\X\left(\Id_n-\Y^{-1}\right)-\Omega\Y\right]\right)\,.
\end{equation*}
In particular, it follows from Remark \ref{RemarkBuresWasserstein} that
\begin{equation*}
\d_{\B\W}^{2}(\A, \X) = \tr(\A) + g_{\A}(\X)\,.
\end{equation*}
Let $\Y_{\C}$ denote the maximizer in the definition of $g_{\A}(\C)$. Since $\C \geq \A$, Remark \ref{RemarkBuresWasserstein} gives
\begin{equation*}
\Y_{\C} \geq \Id_{n}\,,
\end{equation*}
and hence
\begin{equation*}
\Id_{n} - \Y^{-1}_{\C} \geq 0\,.
\end{equation*}
Since $\B \geq \C$, it follows that
\begin{equation*}
\tr\left[\B\left(\Id_{n} - \Y_{\C}^{-1}\right)\right] \geq \tr\left[\C\left(\Id_{n} - \Y_{\C}^{-1}\right)\right]\,.
\end{equation*}
Hence,
\begin{equation*}
g_{\A}(\B) \geq \tr\left[\B\left(\Id_{n} - \Y_{\C}^{-1}\right) - \A\Y_{\C}\right] \geq \tr\left[\C\left(\Id_{n} - \Y^{-1}_{\C}\right) - \A\Y_{\C}\right] = g_{\A}(\C)\,.
\end{equation*}
Thus
\begin{equation*}
\d^{2}_{\B\W}(\A, \C) \leq \d^{2}_{\B\W}(\A,\B)\,,
\end{equation*}
which implies
\begin{equation*}
\d_{\B\W}(\A, \C) \leq \d_{\B\W}(\A,\B)\,.
\end{equation*}
Similarly, using the same argument with $\B$ fixed and the fact that $\A \leq \C \leq \B$, we obtain
\begin{equation*}
\d_{\B\W}(\C, \B) \leq \d_{\B\W}(\A,\B)\,.
\end{equation*}
Finally,
\begin{equation*}
\d_{\B\W}(\A,\B) \geq \max\left\{\d_{\B\W}(\A, \C)\,, \d_{\B\W}(\C, \B)\right\}\,.
\end{equation*}

\end{proof}

The last example in this section, is the most important one for the objectives of this article. We now focus on the $f$-metric induced by a function $f:(0,\infty)\to (0,\infty)$ and a norm $\|\cdot\|,$
\begin{equation*}
\d_{\|\cdot\|, f}(\A, \B) := \left\|f(\A) - f(\B)\right\|\,.
\end{equation*}

\begin{proposition}\label{Prop:MonoConv}

Let $f:(0,\infty)\to\mathbb{R}$ be matrix monotone of degree $n$, and let $\|\cdot\|$ be a monotone norm on $\mathbb{P}_n$. Then $\d_{\|\cdot\|,f}$ is radially convex of order $n$ with respect to the Loewner order on $\mathbb{P}_n$.

\label{FromMonotoneToConvex}

\end{proposition}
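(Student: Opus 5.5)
The plan is to reduce radial convexity to two comparisons between positive semidefinite matrices and then apply monotonicity of the norm. The notion of ``monotone norm'' is taken in the sense that matters here: $0 \leq \X \leq \Y$ implies $\|\X\| \leq \|\Y\|$. Every unitarily invariant norm has this property. Indeed, $0\leq \X\leq \Y$ gives $\lambda_j(\X)\leq\lambda_j(\Y)$ for all $j$ by Weyl's monotonicity principle (Courant--Fischer). The singular values of positive semidefinite matrices are their eigenvalues, and a unitarily invariant norm is a symmetric gauge function of the singular values, which is monotone in each entry on nonnegative vectors. I will state this fact at the outset, so that the proof applies to all unitarily invariant norms, including the Frobenius norm.

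Let $\A,\B,\C\in\mathbb{P}_{n}$ with $\A\leq\C\leq\B$. Replacing $f$ by $-f$ if necessary, I may assume that $f$ is matrix monotone increasing of degree $n$. This replacement does not change $\d_{\|\cdot\|,f}$, because $\|-\X\|=\|\X\|$. Monotonicity of $f$ then gives
\begin{equation*}
f(\A)\leq f(\C)\leq f(\B)\,.
\end{equation*}
Subtracting $f(\A)$ throughout yields
\begin{equation*}
0\leq f(\C)-f(\A)\leq f(\B)-f(\A)\,,
\end{equation*}
and subtracting the chain from $f(\B)$ yields
\begin{equation*}
0\leq f(\B)-f(\C)\leq f(\B)-f(\A)\,.
\end{equation*}
Both inequalities use only the fact that the Loewner order is compatible with addition of Hermitian matrices.

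Applying the monotonicity of the norm to each chain gives
\begin{equation*}
\d_{\|\cdot\|,f}(\A,\C)=\|f(\C)-f(\A)\|\leq\|f(\B)-f(\A)\|=\d_{\|\cdot\|,f}(\A,\B)\,,
\end{equation*}
and similarly $\d_{\|\cdot\|,f}(\C,\B)\leq\d_{\|\cdot\|,f}(\A,\B)$. This is exactly radial convexity on $\mathbb{P}_n$.

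The only step that is not purely formal is the norm monotonicity on $\mathbb{H}^{+}_{n}$, namely Weyl's inequality combined with the gauge-function description of unitarily invariant norms. If ``monotone norm'' is instead taken as part of the hypothesis, even that step is immediate.
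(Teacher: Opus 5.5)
Your proof is correct and takes essentially the same route as the paper. You reduce to $f$ increasing, sandwich both $f(\C)-f(\A)$ and $f(\B)-f(\C)$ between $0$ and $f(\B)-f(\A)$, and then apply monotonicity of the norm. Two points you add are correct details the paper leaves implicit: that every unitarily invariant norm is monotone on $\mathbb{H}^{+}_{n}$ (via Weyl's monotonicity and the symmetric gauge function description), and that replacing $f$ by $-f$ leaves $\d_{\|\cdot\|,f}$ unchanged.
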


\begin{proof}

Let $\A, \B\,, \C \in \mathbb{P}_{n}$ such that $\A \leq \C \leq \B$. Without loss of generality, we assume that $f$ is matrix monotone increasing of degree $n$; otherwise, replace $f$ by $-f$. Hence,
\begin{equation*}
0 \leq f(\C) - f(\A) \leq f(\B) - f(\A)\,.
\end{equation*}
Since $\left\|\cdot\right\|$ is monotone,
\begin{equation*}
\d_{\left\|\cdot\right\|, f}(\A, \C) = \left\|f(\C)-f(\A)\right\| \leq \left\|f(\B) - f(\A)\right\| = \d_{\left\|\cdot\right\|, f}(\A, \B)\,.
\end{equation*}
Similarly, 
\begin{equation*}
\d_{\left\|\cdot\right\|, f}(\C, \B) = \left\|f(\B)-f(\C)\right\| \leq \left\|f(\B) - f(\A)\right\| = \d_{\left\|\cdot\right\|, f}(\A, \B)\,.
\end{equation*}
Therefore
\begin{equation*}
\d_{\left\|\cdot\right\|,f}(\A, \B) \geq \max\left\{\d_{\left\|\cdot\right\|, f}(\A,\C)\,, \d_{\left\|\cdot\right\|,f}(\C, \B)\right\}\,,
\end{equation*}
and $\d_{\|\cdot\|,f}$ is radially convex of order $n$ with respect to the Loewner order.

\end{proof}
 
\section{Frobenius norm: From Radial Convexity to Monotonicity}

\label{SectionRCtoM}

In this section, we focus on radial convexity with respect to the Frobenius norm and investigate the converse direction of the last result obtained in the previous section. We first derive a necessary differential condition for radial convexity and then characterize this condition in terms of the positivity of the Hadamard square of the corresponding Loewner matrix. This leads naturally to the question of whether radial convexity of order $n$ implies matrix monotonicity of the same degree. We formulate this question as a conjecture, prove it in degree $2$, and conclude with examples supporting the conjecture in higher degrees.

\noindent Recall that, since the Frobenius norm is fixed throughout this section, we say that $f$ is radially convex of order $n$ if the metric $\d_f$ induced by $f$ is radially convex with respect to the Loewner order on $\mathbb{P}_n$. We start with a proposition\,.

\begin{proposition}

Let $f:(0,\infty)\to\mathbb{R}$ be continuous. If $f$ is radially convex of order $n$, then $f$ is monotone of degree $1$ on $(0,\infty)$\,.

\label{PropositionMonotoneDegreeOne}

\end{proposition}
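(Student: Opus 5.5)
Radial convexity of order $n$ gives, for scalar multiples of the identity, a one-dimensional radial convexity statement for $f$. A continuous function on an interval with that property must be monotone; otherwise it would have a strict interior extremum, which contradicts the property.

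\textbf{Reduction to $n=1$.} For $0<a\le c\le b$, put $\A=a\Id_n$, $\C=c\Id_n$, $\B=b\Id_n$. Then $\A\le\C\le\B$ in the Loewner order. By functional calculus $f(x\Id_n)=f(x)\Id_n$, so
\begin{equation*}
\d_f(\A,\C)=\left\|(f(c)-f(a))\Id_n\right\|_{\F}=\sqrt{n}\,|f(c)-f(a)|,
\end{equation*}
and similarly for the other pairs. Radial convexity of order $n$ therefore yields, after dividing by $\sqrt{n}$,
\begin{equation*}
|f(c)-f(a)|\le|f(b)-f(a)| \qquad\text{and}\qquad |f(b)-f(c)|\le|f(b)-f(a)|
\end{equation*}
for all $0<a\le c\le b$.

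\textbf{Claim: $f$ is injective or constant.} Suppose $f(a)=f(b)$ with $a<b$. The scalar inequalities give $f(c)=f(a)$ for every $c\in[a,b]$, so $f$ is constant on $[a,b]$. If $f$ is not monotone, then by the standard characterization of continuous monotone functions (a continuous function on an interval is monotone if and only if it has no ``zigzag'') there exist $x<y<z$ with $f(y)$ strictly greater than both $f(x)$ and $f(z)$, or strictly smaller than both. Assume $f(y)>\max\{f(x),f(z)\}$; the other case follows by replacing $f$ with $-f$, which does not change $\d_f$.

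\textbf{Deriving a contradiction.} Without loss of generality $f(x)\le f(z)<f(y)$; the case $f(z)<f(x)$ is symmetric, using the second inequality with $b$ fixed. Apply the first inequality with $a=x$, $c=y$, $b=z$:
\begin{equation*}
f(y)-f(x)=|f(y)-f(x)|\le|f(z)-f(x)|=f(z)-f(x).
\end{equation*}
This gives $f(y)\le f(z)$, a contradiction. In the symmetric case $f(z)<f(x)<f(y)$, the second inequality gives $f(y)-f(z)\le f(x)-f(z)$, again a contradiction. Hence $f$ is monotone, i.e. matrix monotone of degree $1$ on $(0,\infty)$.

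\textbf{Main obstacle.} The only delicate point is justifying the zigzag characterization. To prove it directly: if $f$ is not monotone, there exist $p<q$ with $f(p)<f(q)$ and $r<s$ with $f(r)>f(s)$. Move linearly from $(p,q)$ to $(r,s)$, i.e. set $p_t=(1-t)p+tr$ and $q_t=(1-t)q+ts$, so that $p_t<q_t$ throughout. The function $t\mapsto f(q_t)-f(p_t)$ is continuous and changes sign, so it vanishes at some $t$. That produces $u<v$ with $f(u)=f(v)$, and $f$ is then constant on $[u,v]$ by the claim.

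Carrying the resulting triples through the argument above shows that no strict zigzag exists, which completes the proof. Alternatively, one can apply the triple argument directly to the points among $p,q,r,s$ once they are ordered.
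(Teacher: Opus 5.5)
Your main line is the paper's proof. Testing radial convexity on $a\Id_n\le c\Id_n\le b\Id_n$ yields the two scalar inequalities. Your zigzag case analysis is just an unpacking of Remark~\ref{TechnicalRemark}: it shows that $f(c)$ lies between $f(a)$ and $f(b)$ whenever $a<c<b$.

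The paper then simply asserts that a betweenness-preserving function is monotone. You rightly identify this as the one delicate step, but the justification you give for it does not work. The intermediate value argument only produces $u<v$ with $f(u)=f(v)$, hence $f$ constant on $[u,v]$. That is no contradiction, since every monotone function that is constant on some subinterval has such pairs. Your closing sentence, that this ``shows that no strict zigzag exists'', also runs in the wrong direction: what you needed was to \emph{produce} a strict zigzag from non-monotonicity.

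The homotopy trick is the proof that a continuous \emph{injective} function is strictly monotone, which is presumably why you titled the previous step ``$f$ is injective or constant''. But you only proved constancy on $[a,b]$ when $f(a)=f(b)$. The global statement cannot follow from the scalar inequalities, since $f(x)=\min\{x,1\}$ satisfies all of them and is neither injective nor constant.

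The missing step is elementary and needs no continuity. Fix $m<M$ and suppose $f(m)\le f(M)$. Betweenness for $(m,x,M)$ gives $f(m)\le f(x)\le f(M)$ for every $x\in[m,M]$. Then for $m<x<y<M$, betweenness for $(m,x,y)$ gives $f(x)\le\max\{f(m),f(y)\}=f(y)$; the cases $x=m$ or $y=M$ are covered by the first inequality. Hence $f$ is non-decreasing on $[m,M]$, and symmetrically non-increasing if $f(m)\ge f(M)$. Choosing $[m,M]$ to contain $p,q,r,s$ contradicts having both $f(p)<f(q)$ and $f(r)>f(s)$.

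This is what your final ``alternatively'' sentence gestures at. With it written out, and the intermediate value paragraph deleted, your proof is complete and coincides with the paper's, while also filling in the step the paper leaves implicit.
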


\begin{remark}

We recall a classical result. Let $x\,, y\,, z \in\mathbb{R}$. Suppose that
\begin{equation*}
\left|x-z\right| \leq \left|x-y\right| \qquad \text{ and } \qquad \left|z-y\right| \leq \left|x-y\right|\,.
\end{equation*}
Then $z$ lies between $x$ and $y$. In other words, we have
\begin{equation*}
\min\left\{x\,, y\right\} \leq z \leq \max\left\{x\,, y\right\}\,.
\end{equation*}

\label{TechnicalRemark}

\end{remark}

\begin{proof}[Proof of Proposition \ref{PropositionMonotoneDegreeOne}]

Let $0 < a < c < b$. Let $\A, \B, \C$ be the matrices given by
\begin{equation*}
\A = a\Id_{n}\,, \qquad \C = c\Id_{n}\,, \qquad \B = b\Id_{n}\,.
\end{equation*}
Then $\A \leq \C \leq \B$. By the radial convexity hypothesis, we get
\begin{equation*}
\d(\A, \C) \leq \d(\A, \B) \qquad \text{ and } \qquad \d(\C, \B) \leq \d(\A, \B)\,.
\end{equation*}
Since
\begin{equation*}
f(a\Id_{n})=f(a)\Id_{n}\,, \qquad f(c\Id_{n})=f(c)\Id_{n}\,, \qquad f(b\Id_{n}) = f(b)\Id_{n}\,,
\end{equation*}
we have
\begin{equation*}
\d(\A,\C) = \left|f(a)-f(c)\right|\left\|\Id_{n}\right\| \qquad \text{ and } \qquad \d(\A, \B) = \left|f(a)-f(b)\right|\left\|\Id_{n}\right\|\,.
\end{equation*}
Thus,
\begin{equation*}
\left|f(a)-f(c)\right| \leq \left|f(a)-f(b)\right| \qquad \text{ and } \qquad \left|f(c) - f(b)\right| \leq \left|f(a)-f(b)\right|\,.
\end{equation*}
From Remark \ref{TechnicalRemark}, $f(c)$ lies between $f(a)$ and $f(b)$. Hence,
\begin{equation*}
\min\left\{f(a)\,, f(b)\right\} \leq f(c) \leq \max\left\{f(a)\,, f(b)\right\}\,.
\end{equation*}
Therefore, $f$ preserves betweenness on $\left(0\,, \infty\right)$, so $f$ is monotone of degree $1$\,.

\end{proof}

\noindent We now prove an analogue of Proposition \ref{MonotoneFromNtoM}

\begin{proposition}

Let $f: \left(0, \infty\right) \to \mathbb{R}$. If $f$ is radially convex of order  $n$ with respect to $\d := \d_{f}$, then it is radially convex of order $m$ for every $1 \leq m \leq n$\,.

\label{PropositionRCNToM}

\end{proposition}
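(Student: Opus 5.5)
We mimic the proof of Proposition \ref{MonotoneFromNtoM}, padding the $m\times m$ matrices with a common scalar block. Fix $1\leq m<n$ and $\A,\B,\C\in\mathbb{P}_m$ with $\A\leq\C\leq\B$. Choose any $\alpha>0$ and set
\begin{equation*}
\widetilde{\A}:=\A\oplus\alpha\Id_{n-m}\,,\qquad \widetilde{\C}:=\C\oplus\alpha\Id_{n-m}\,,\qquad \widetilde{\B}:=\B\oplus\alpha\Id_{n-m}\,.
\end{equation*}
These lie in $\mathbb{P}_n$, since $\alpha>0$ and $\A,\B,\C>0$. Moreover $\widetilde{\C}-\widetilde{\A}=(\C-\A)\oplus 0\geq 0$ and $\widetilde{\B}-\widetilde{\C}=(\B-\C)\oplus 0\geq 0$, so $\widetilde{\A}\leq\widetilde{\C}\leq\widetilde{\B}$ in $\mathbb{P}_n$.

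By the functional calculus, $f(\X\oplus\alpha\Id_{n-m})=f(\X)\oplus f(\alpha)\Id_{n-m}$ for each $\X\in\{\A,\B,\C\}$. Hence, for any two of them,
\begin{equation*}
f(\widetilde{\X})-f(\widetilde{\Y})=\left(f(\X)-f(\Y)\right)\oplus 0\,.
\end{equation*}
From the entrywise formula \eqref{DefinitionFrobenius}, the Frobenius norm of a block-diagonal matrix whose second block vanishes equals the Frobenius norm of the first block. Therefore $\d_f(\widetilde{\X},\widetilde{\Y})=\d_f(\X,\Y)$, where on the left $\d_f$ is computed in $\mathbb{P}_n$ and on the right in $\mathbb{P}_m$.

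Radial convexity of order $n$ applied to $\widetilde{\A}\leq\widetilde{\C}\leq\widetilde{\B}$ gives
\begin{equation*}
\d_f(\widetilde{\A},\widetilde{\C})\leq\d_f(\widetilde{\A},\widetilde{\B})\qquad\text{and}\qquad \d_f(\widetilde{\C},\widetilde{\B})\leq\d_f(\widetilde{\A},\widetilde{\B})\,.
\end{equation*}
Translating through the norm identity yields the same inequalities for $\A,\C,\B$ in $\mathbb{P}_m$, which is radial convexity of order $m$. The case $m=n$ is trivial.

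The argument has no real obstacle. The one point needing care is that the padded block must remain in $\mathbb{P}_n$, which is why $\alpha$ is taken positive. The argument uses only that the norm ignores a zero block, so it carries over verbatim to any unitarily invariant norm whose values on $\X\oplus 0$ are compatible across dimensions, such as the Schatten norms.
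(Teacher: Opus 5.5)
Your proposal is correct and follows essentially the same route as the paper. The paper also pads $\A,\C,\B$ with $\alpha\Id_{n-m}$ for $\alpha>0$, uses the functional calculus to write the differences as $(f(\X)-f(\Y))\oplus 0_{n-m}$, and invokes the compatibility of the norm with block embeddings (its Remark on $\left\|\X\oplus 0_{n-m}\right\|=\left\|\X\right\|$) to transfer both inequalities down to $\mathbb{P}_m$.
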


\begin{remark}

Using the embedding $\Mat(m) \hookrightarrow \Mat(n)\,, 1 \leq m \leq n$, we get, for all $\X \in \mathbb{H}_{m}$, that
\begin{equation}
\left\|\X\oplus 0_{n-m}\right\| = \left\|\X\right\|\,.
\end{equation}
This is not specific to the Frobenius norm; the same result remains valid for any unitarily invariant norm\,.

\label{CompatibilityFrobeniusNorm}

\end{remark}

\begin{proof}

Let $1 \leq m <n$, and let $\A, \B, \C \in \mathbb{P}_{m}$ such that $\A \leq \C \leq \B$. Let $\alpha > 0$, and let
\begin{equation*}
\widetilde{\A} := \A \oplus \alpha\Id_{n-m}\,, \qquad \widetilde{\C} := \C \oplus \alpha\Id_{n-m}\,, \qquad \widetilde{\B} := \B \oplus \alpha\Id_{n-m}\,.
\end{equation*}
In particular, $\widetilde{\A} \leq \widetilde{\C} \leq \widetilde{\B}$. By radial convexity of order $n$, we get
\begin{equation*}
\d\left(\widetilde{\A}, \widetilde{\C}\right) \leq \d \left(\widetilde{\A}, \widetilde{\B}\right) \qquad \text{ and } \qquad \d\left(\widetilde{\C}, \widetilde{\B}\right) \leq \d\left(\widetilde{\A}, \widetilde{\B}\right)\,.
\end{equation*}
Again, by functional calculus, we get
\begin{equation*}
f(\widetilde{\A}) - f(\widetilde{\C}) = \left(f(\A) - f(\C)\right) \oplus 0_{n-m}\,, \qquad f(\widetilde{\A}) - f(\widetilde{\B}) = \left(f(\A) - f(\B)\right) \oplus 0_{n-m}\,,
\end{equation*}
and
\begin{equation*}
f(\widetilde{\C}) - f(\widetilde{\B}) = \left(f(\C) - f(\B)\right) \oplus 0_{n-m}\,.
\end{equation*}
Using the compatibility of the norm with block embeddings (see Remark \ref{CompatibilityFrobeniusNorm}), we obtain
\begin{equation*}
\left\|f(\A) - f(\C)\right\| = \d(\A,\C) \leq \d(\A,\B) = \left\|f(\A) - f(\B)\right\|
\end{equation*}
and
\begin{equation*}
\left\|f(\B) - f(\C)\right\|= \d(\B, \C) \leq \d(\A, \B) = \left\|f(\A) - f(\B)\right\|\,.
\end{equation*}
Thus $f$ is radially convex of order $m$.

\end{proof}

\begin{proposition}

Let $f: \left(0\,, \infty\right) \to \mathbb{R}$ be differentiable, and assume that $f$ is radially convex of order $n$. Then, for every $\A \in \mathbb{P}_{n}$ and every $\X\,, \Y \geq 0$, we have
\begin{equation*}
\Re\langle\D f(\A)(\X)\,,\D f(\A)(\Y)\rangle_{\F} \geq 0\,.
\end{equation*}
where $\langle\cdot\,, \cdot\rangle_{\F}$ is the inner product associated with the Frobenius norm $\left\|\cdot\right\|_{\F}$ (see Equation \eqref{DefinitionFrobenius}).

\label{PropositionSeptember20V1}

\end{proposition}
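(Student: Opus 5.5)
The plan is to apply radial convexity to a small "cone" of matrices spanned by $\X$ and $\Y$ and then differentiate. Fix $\A \in \mathbb{P}_n$ and $\X, \Y \geq 0$. For small $s, t > 0$ put
\begin{equation*}
\C_{s} := \A + s\X\,, \qquad \B_{s,t} := \A + s\X + t\Y\,.
\end{equation*}
These lie in $\mathbb{P}_n$ and satisfy $\A \leq \C_{s} \leq \B_{s,t}$, since $s\X \geq 0$ and $t\Y \geq 0$. Radial convexity for $\d_f$, together with the remark that we may square, gives
\begin{equation*}
\left\|f(\C_{s}) - f(\A)\right\|_{\F}^{2} \leq \left\|f(\B_{s,t}) - f(\A)\right\|_{\F}^{2}\,.
\end{equation*}

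The key step is to choose $t = s$ and expand both sides to first order in $s$, using the Fr\'echet derivative. Write $\U = \D f(\A)(\X)$ and $\V = \D f(\A)(\Y)$. By linearity of $\D f(\A)$ we have
\begin{equation*}
f(\A + s\X) - f(\A) = s\,\U + o(s)\,, \qquad f(\A + s(\X+\Y)) - f(\A) = s(\U + \V) + o(s)\,.
\end{equation*}
Dividing the squared inequality by $s^{2}$ and letting $s \to 0^{+}$ yields
\begin{equation*}
\|\U\|_{\F}^{2} \leq \|\U + \V\|_{\F}^{2} = \|\U\|_{\F}^{2} + 2\Re\langle \U, \V\rangle_{\F} + \|\V\|_{\F}^{2}\,.
\end{equation*}
This gives $2\Re\langle \U,\V\rangle_{\F} + \|\V\|_{\F}^{2} \geq 0$, which is not yet the claim.

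To remove the $\|\V\|^{2}$ term, I would introduce an independent scale: take $t = \varepsilon s$ with $\varepsilon > 0$ fixed. The same limit gives
\begin{equation*}
2\varepsilon\Re\langle \U,\V\rangle_{\F} + \varepsilon^{2}\|\V\|_{\F}^{2} \geq 0\,.
\end{equation*}
Dividing by $\varepsilon$ and letting $\varepsilon \to 0^{+}$ yields $\Re\langle \U, \V\rangle_{\F} \geq 0$, as claimed. The other radial-convexity inequality, involving $\d(\C,\B)$, is not needed.

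The main obstacle is justifying the first-order expansions. This requires $f$ to be Fr\'echet differentiable as a matrix function at $\A$, which holds when $f$ is $C^{1}$ by the Daleckii--Krein theory (Theorem \ref{TheoremDifferentialLoewner}). Mere differentiability of $f$ only gives the directional limit along a single fixed direction $\H$. Fortunately, for fixed $\varepsilon$ both expansions are along the fixed directions $\X$ and $\X + \varepsilon\Y$, so only the directional (Gateaux) derivative in the Notation is used. Its linearity in the direction follows from the Daleckii--Krein expression $\L_f(\A)\circ\H$, after the unitary reduction of Lemma \ref{LemmaAtoD}. I would point this out and assume, as the paper implicitly does, enough regularity for Theorem \ref{TheoremDifferentialLoewner} to apply.
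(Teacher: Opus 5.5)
Your proof is correct and is essentially the paper's: the paper compares $\A+t\X$ with $\A+t(\X+s\Y)$, lets $t\to0^{+}$ to get $\|\D f(\A)(\X)\|_{\F}\leq\|\D f(\A)(\X)+s\,\D f(\A)(\Y)\|_{\F}$, then squares, expands, divides by $s$ and lets $s\to0^{+}$, which is exactly your $t=\varepsilon s$ scaling (your preliminary $t=s$ step is unnecessary). Your regularity caveat is fair, since the paper simply invokes linearity of $\D f(\A)$ while Theorem \ref{TheoremDifferentialLoewner} is stated for $C^{1}$ functions, but the issue is milder than you suggest: differentiability of $f$ alone already makes the matrix map Fr\'echet differentiable at $\A$, with derivative given by the Daleckii--Krein formula (compare $f$ with its Hermite interpolating polynomial at the eigenvalues of $\A$ and use Weyl's inequality).
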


\begin{remark}

Since $\D f(\A)(\X)$ and $\D f(\A)(\Y)$ are Hermitian, their Frobenius inner product is real. Therefore, the real part in the previous proposition is unnecessary, and we may simply write
\begin{equation*}
\langle\D f(\A)(\X)\,,\D f(\A)(\Y)\rangle_{\F} \geq 0\,.
\end{equation*}

\end{remark}

\begin{proof}

Let $\A \in \mathbb{P}_{n}$ and let $\X\,, \Y \geq 0$. For all $s\,, t \geq 0$, we have
\begin{equation*}
\A \leq \A+t\X \leq \A+t\left(\X+s\Y\right)\,.
\end{equation*}
By radial convexity, we get
\begin{equation*}
\left\|f\left(\A+t\X\right) - f(\A)\right\|_{\F} \leq \left\|f\left(\A+t\left(\X+s\Y\right)\right)-f(\A)\right\|_{\F}\,.
\end{equation*}
In particular, we have
\begin{equation*}
\frac{\left\|f\left(\A+t\X\right) - f(\A)\right\|_{\F}}{t} \leq \frac{\left\|f\left(\A+t\left(\X+s\Y\right)\right)-f(\A)\right\|_{\F}}{t}\,,
\end{equation*}
and by letting $t \to 0^{+}$, we get
\begin{equation*}
\left\|\D f(\A)(\X)\right\|_{\F} \leq \left\|\D f(\A)\left(\X+s\Y\right)\right\|_{\F}\,.
\end{equation*}
Since $\D f(\A)$ is linear, we get that $\D f(\A)\left(\X+s\Y\right) = \D f(\A)(\X) + s\D f(\A)(\Y)$. By squaring both sides of the previous equation, we obtain
\begin{equation*}
\left\|\D f(\A)(\X)\right\|^{2}_{\F} \leq \left\|\D f(\A)(\X)+s\D f(\A)(\Y)\right\|^{2}_{\F}\,.
\end{equation*}
Using the Frobenius inner product, we get
\begin{equation*}
\left\|\D f(\A)(\X)+s\D f(\A)(\Y)\right\|^{2}_{\F} 
\end{equation*}
\begin{equation*}
= \left\|\D f(\A)(\X)\right\|^{2}_{\F} + 2s\Re\langle\D f(\A)(\X)\,, \D f(\A)(\Y)\rangle_{\F} + s^{2}\left\|\D f(\A)(\Y)\right\|^{2}_{\F}\,.
\end{equation*}
Hence
\begin{equation*}
2s\Re\langle\D f(\A)(\X)\,, \D f(\A)(\Y)\rangle_{\F} + s^{2}\left\|\D f(\A)(\Y)\right\|^{2}_{\F} \geq 0\,.
\end{equation*}
Dividing by $s > 0$ and letting $s \to 0^{+}$ gives
\begin{equation*}
\Re\langle\D f(\A)(\X)\,, \D f(\A)(\Y)\rangle_{\F} \geq 0\,.
\end{equation*}

\end{proof}

\noindent For a real symmetric matrix $\L$, we denote by $\Phi_{\L}$ the map on $\Mat(n)$ given by
\begin{equation*}
\Phi_{\L}(\H) = \L \circ \H\,, \qquad \left(\H \in \Mat(n)\right)\,.
\end{equation*}
Note that $\Phi_{\L}\left(\mathbb{H}_{n}\right) \subseteq \mathbb{H}_{n}$. Indeed, it follows from Remark \ref{RemarkHadamard} that for all $\X \in \mathbb{H}_{n}$, we get
\begin{equation*}
\Phi_{\L}(\X)^{*} = \left(\L \circ \X\right)^{*} = \L^{*} \circ \X^{*} = \L \circ \X = \Phi_{\L}(\X)\,.
\end{equation*}

\begin{lemma}

For all $\X\,, \Y \in \mathbb{H}_{n}$, we have
\begin{equation}
\langle\Phi_{\L}(\X)\,, \Y\rangle_{\F} = \langle\X\,, \Phi_{\L}(\Y)\rangle_{\F}\,.
\label{SelfAdjoint}
\end{equation}
In particular, we have
\begin{equation*}
\langle\Phi_{\L}(\X)\,, \Phi_{\L}(\Y)\rangle_{\F} = \langle\X\,, \Phi_{\L \circ \L}(\Y)\rangle_{\F}\,.
\end{equation*}

\label{LemmaAdjoint}

\end{lemma}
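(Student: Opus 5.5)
The plan is to verify the identity entrywise, using the explicit formula for the Frobenius inner product together with the fact that $\L$ is a real symmetric matrix. Recall that for $\X = (x_{i,j})$ and $\Y = (y_{i,j})$ in $\Mat(n)$,
\begin{equation*}
\langle \X\,, \Y\rangle_{\F} = \tr\left(\Y^{*}\X\right) = \sum_{i,j=1}^{n} x_{i,j}\overline{y_{i,j}}\,,
\end{equation*}
which follows from the definition of the norm in Equation \eqref{DefinitionFrobenius}; one could equally use the convention $\tr(\X^{*}\Y)$, and the argument is unchanged.

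Write $\L = (l_{i,j})$, where each $l_{i,j}$ is a real number. Then
\begin{equation*}
\langle\Phi_{\L}(\X)\,, \Y\rangle_{\F} = \sum_{i,j} l_{i,j}x_{i,j}\overline{y_{i,j}}\,.
\end{equation*}
Because every $l_{i,j}$ is real, $l_{i,j} = \overline{l_{i,j}}$, so the right-hand side can be rewritten as
\begin{equation*}
\sum_{i,j} x_{i,j}\,\overline{l_{i,j}y_{i,j}} = \langle \X\,, \Phi_{\L}(\Y)\rangle_{\F}\,.
\end{equation*}
This proves \eqref{SelfAdjoint}. The computation never uses that $\X$ and $\Y$ are Hermitian; it needs only that the entries of $\L$ are real, so the identity in fact holds on all of $\Mat(n)$.

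For the second identity, apply \eqref{SelfAdjoint} with $\X$ replaced by $\Phi_{\L}(\X)$, which is admissible because $\Phi_{\L}$ maps $\mathbb{H}_{n}$ into $\mathbb{H}_{n}$, as noted before the lemma. Moving $\Phi_{\L}$ from the first slot to the second gives
\begin{equation*}
\langle\Phi_{\L}(\X)\,, \Phi_{\L}(\Y)\rangle_{\F} = \langle \X\,, \Phi_{\L}\left(\Phi_{\L}(\Y)\right)\rangle_{\F}\,.
\end{equation*}
By associativity of the Hadamard product (Remark \ref{RemarkHadamard}),
\begin{equation*}
\Phi_{\L}\left(\Phi_{\L}(\Y)\right) = \L\circ(\L\circ\Y) = (\L\circ\L)\circ\Y = \Phi_{\L\circ\L}(\Y)\,,
\end{equation*}
which is the claimed formula.

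The proof is routine and has no real obstacle. The only point requiring care is where the realness of $\L$ enters: it is needed for the conjugate of $l_{i,j}$ to equal $l_{i,j}$ itself. Symmetry of $\L$ is not used in this lemma; it matters only for the earlier observation that $\Phi_{\L}$ preserves $\mathbb{H}_{n}$.
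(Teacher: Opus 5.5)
Your proof is correct. It is the same direct entrywise computation as the paper's, but your bookkeeping changes which hypothesis on $\L$ does the work.

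The paper uses that $\X$, $\Y$ and $\Phi_{\L}(\X)$ are Hermitian to drop all conjugations. The Hermitian property of $\Phi_{\L}(\X)$ itself relies on $\L$ being real symmetric. The paper then writes $\langle\Phi_{\L}(\X),\Y\rangle_{\F}=\sum_{i,j}\L_{i,j}\X_{i,j}\Y_{j,i}$ and compares it with $\sum_{i,j}\X_{i,j}\L_{j,i}\Y_{j,i}$, so the symmetry $\L_{i,j}=\L_{j,i}$ closes the argument.

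You instead use the conjugated entrywise formula $\sum_{i,j}x_{i,j}\overline{y_{i,j}}$. There the only thing needed is $l_{i,j}=\overline{l_{i,j}}$, so the identity holds on all of $\Mat(n)$ for any real $\L$, symmetric or not. This isolates the real reason: in the orthonormal basis of matrix units, $\Phi_{\L}$ is diagonal with entries $l_{i,j}$, hence self-adjoint exactly when these are real. For the Loewner matrix both properties hold, so nothing changes downstream, but your version is the cleaner and more general statement.

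One small slip in the second part. To obtain $\langle\Phi_{\L}(\X),\Phi_{\L}(\Y)\rangle_{\F}=\langle\X,\Phi_{\L}(\Phi_{\L}(\Y))\rangle_{\F}$, you apply the first identity with $\Y$ replaced by $\Phi_{\L}(\Y)$, not with $\X$ replaced by $\Phi_{\L}(\X)$. Your displayed equation is right; only the description is off. Also, since you proved the first identity on all of $\Mat(n)$, the appeal to $\Phi_{\L}$ preserving $\mathbb{H}_{n}$ is not actually needed there.
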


\begin{proof}

We have $\X \in \mathbb{H}_{n}$, so $\X = \X^{*}$ and $\Phi_{\L}(\X)^{*} = \Phi_{\L}(\X)$. Then
\begin{equation*}
\langle\Phi_{\L}(\X)\,, \Y\rangle_{\F} = \sum\limits_{i = 1}^{n} \left(\Phi_{\L}(\X)\Y\right)_{i,i} = \sum\limits_{i = 1}^{n}\left(\sum\limits_{j = 1}^{n}\Phi_{\L}(\X)_{i,j}\Y_{j,i}\right) = \sum\limits_{i = 1}^{n}\sum\limits_{j = 1}^{n} \L_{i,j}\X_{i,j}\Y_{j,i}
\end{equation*}
and
\begin{equation*}
\langle\X\,, \Phi_{\L}(\Y)\rangle_{\F} = \sum\limits_{i = 1}^{n} \left(\X\Phi_{\L}(\Y)\right)_{i,i} = \sum\limits_{i = 1}^{n}\left(\sum\limits_{j = 1}^{n} \X_{i,j}\Phi_{\L}(\Y)_{j,i}\right) = \sum\limits_{i = 1}^{n}\sum\limits_{j = 1}^{n} \X_{i,j}\L_{j,i}\Y_{j,i}\,.
\end{equation*}
Using that $\L = \L^{t}$, we get that $\L_{i,j} = \L_{j,i}$, and Equation \eqref{SelfAdjoint}. Moreover, using that
\begin{equation*}
\Phi_{\L}(\Phi_{\L}(\Y)) = \L \circ \Phi_{\L}(\Y) = \L \circ \left(\L \circ \Y\right) = \left(\L \circ \L\right) \circ \Y = \Phi_{\L \circ \L}(\Y)\,,
\end{equation*}
we get that
\begin{equation*}
\langle\Phi_{\L}(\X)\,, \Phi_{\L}(\Y)\rangle_{\F} = \langle\X\,, \Phi_{\L}(\Phi_{\L}(\Y))\rangle_{\F} = \langle\X\,, \Phi_{\L \circ \L}(\Y)\rangle_{\F}\,,
\end{equation*}
and the lemma follows\,.

\end{proof}

\begin{proposition}

Let  $f:(0,\infty)\to\mathbb{R}$ be differentiable, $\A$ be a diagonal matrix in $\mathbb{P}_{n}$, and $\L := \L_{f}(\A)$ be the corresponding Loewner matrix. Then $\L \circ \L \geq 0$ if and only if $\langle\D f(\A)(\X)\,, \D f(\A)(\Y)\rangle_{\F} \geq 0$ for all $\X\,, \Y \in \mathbb{H}^{+}_{n}$\,.

\label{PropositionSeptember20V2}

\end{proposition}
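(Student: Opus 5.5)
By Theorem \ref{TheoremDifferentialLoewner}, since $\A$ is diagonal we have $\D f(\A)(\H) = \L \circ \H = \Phi_{\L}(\H)$ for every $\H \in \mathbb{H}_{n}$. Lemma \ref{LemmaAdjoint} then turns the condition into
\begin{equation*}
\langle\D f(\A)(\X)\,, \D f(\A)(\Y)\rangle_{\F} = \langle\X\,, (\L\circ\L)\circ\Y\rangle_{\F}\,.
\end{equation*}
So the statement reduces to the following claim about the real symmetric matrix $\M := \L \circ \L$: we have $\M \geq 0$ if and only if $\langle\X\,, \M\circ\Y\rangle_{\F} \geq 0$ for all $\X, \Y \in \mathbb{H}^{+}_{n}$.

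For the forward direction, assume $\M \geq 0$ and let $\Y \geq 0$. By the Schur product theorem recalled in Remark \ref{RemarkHadamard}, $\M \circ \Y \geq 0$. For any $\X \geq 0$,
\begin{equation*}
\langle\X\,, \M\circ\Y\rangle_{\F} = \tr\left(\X(\M\circ\Y)\right) = \tr\left(\X^{\frac12}(\M\circ\Y)\X^{\frac12}\right) \geq 0\,,
\end{equation*}
since the trace of a product of two positive semidefinite matrices is nonnegative. This direction is routine.

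For the converse, which is the only step requiring a choice of test matrices, fix $v \in \mathbb{C}^{n}$. Take $\Y = \J$, the all-ones matrix, which is $vv^{*}$ with $v = (1,\ldots,1)^{t}$ and hence positive semidefinite. Take $\X = vv^{*} \geq 0$. Since $\M \circ \J = \M$, the hypothesis gives
\begin{equation*}
0 \leq \langle vv^{*}\,, \M\rangle_{\F} = \tr(vv^{*}\M) = v^{*}\M v\,.
\end{equation*}
This holds for every $v$, so $\M \geq 0$. The only care needed is the convention for $\langle\cdot,\cdot\rangle_{\F}$: if it is $\tr(\X^{*}\Y)$ or $\tr(\X\Y)$, Hermitian arguments give the same real value $v^{*}\M v$ up to conjugation. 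Since $\M$ is real symmetric this quantity is real, so the sign conclusion is unaffected.
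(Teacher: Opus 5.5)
Your proof is correct and follows the paper's argument. You reduce via Daleckii--Krein and Lemma \ref{LemmaAdjoint} to $\langle \X, (\L\circ\L)\circ\Y\rangle_{\F}$, use the Schur product theorem for the forward direction, and test with $\Y=\J$ and rank-one $\X=vv^{*}$ for the converse; the paper first shows $(\L\circ\L)\circ\Y\geq 0$ for every $\Y\geq 0$ and only then sets $\Y=\J$, which is the same thing. One cosmetic fix: you use the letter $v$ both for the arbitrary test vector and for the all-ones vector generating $\J$, so rename the latter.
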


\begin{proof}

First of all, for all $\X\,, \Y \in \mathbb{H}^{+}_{n}$, we get from Lemma \ref{LemmaAdjoint} that
\begin{equation*}
\langle\D f(\A)(\X)\,, \D f(\A)(\Y)\rangle_{\F} = \langle \Phi_{\L}(\X)\,, \Phi_{\L}(\Y)\rangle_{\F} = \langle \X\,, \Phi_{\L \circ \L}(\Y)\rangle_{\F}\,.
\end{equation*}
\begin{itemize}
 \item $\left(\Rightarrow\right):$  Assume that $\L \circ \L \geq 0$. For all $\X\,, \Y \in \mathbb{H}^{+}_{n}$, it follows from Remark \ref{RemarkHadamard} that $\Phi_{\L \circ \L}(\Y) \geq 0$ and $\tr(\X\Phi_{\L \circ \L}(\Y)) \geq 0$. Therefore
\begin{eqnarray*}
0 \leq \tr(\X\Phi_{\L \circ \L}(\Y)) & = & \langle \X\,, \Phi_{\L \circ \L}(\Y)\rangle_{\F} = \langle\D f(\A)(\X)\,, \D f(\A)(\Y)\rangle_{\F} \\
& = & \langle \Phi_{\L}(\X)\,, \Phi_{\L}(\Y)\rangle_{\F}\,.
\end{eqnarray*}
\item $\left(\Leftarrow\right):$ Assume that $\langle\D f(\A)(\X)\,, \D f(\A)(\Y)\rangle_{\F} \geq 0$ for all $\X\,, \Y \in \mathbb{H}^{+}_{n}$. For all $x \in \mathbb{C}^{n}$, we denote by $\T_{x}$ the matrix of $\mathbb{H}^{+}_{n}$ given by $\T_{x} = xx^{*}$. In particular, for every $\Y\in\mathbb H_n^+$ and every $x\in\mathbb C^n$, we have$\tr\left(\T_{x}\Phi_{\L \circ \L}(\Y)\right) \geq 0$. Using that
\begin{equation*}
\tr\left(\T_{x}\Phi_{\L \circ \L}(\Y)\right) = \tr\left(xx^{*}\Phi_{\L \circ \L}(\Y)\right) = \tr(x^{*}\Phi_{\L \circ \L}(\Y)x) = x^{*}\Phi_{\L \circ \L}(\Y)x\,,
\end{equation*}
it follows that $x^{*}\Phi_{\L \circ \L}(\Y)x \geq 0$ for all $x \in \mathbb{C}^{n}$, i.e. $\Phi_{\L \circ \L}(\Y) \geq 0$ for all $\Y \in \mathbb{H}^{+}_{n}$. In particular, for $\Y = \J$ (see Remark \ref{RemarkHadamard}), we get that 
\begin{equation*}
0 \leq \Phi_{\L \circ \L}(\J) = \left(\L \circ \L\right) \circ \J = \L \circ \L\,,
\end{equation*}
i.e. $\L \circ \L \geq 0$\,.
\end{itemize}

\end{proof}

\begin{remark}

Let $\A \in \mathbb{P}_{n}$, and let $\L = \L_{f}(\A)$ be the corresponding Loewner matrix. 
\begin{itemize}
\item Let $\X \in \mathbb{H}_{n}$. As explained in Remark \ref{RemarkHadamard}, if $\X \geq 0$ or $\X \leq 0$, we get $\X \circ \X \geq 0$. However,the converse is not true for arbitrary Hermitian matrices. Indeed, let $\X$ be the matrix in $\mathbb{H}_{2}$ given by
\begin{equation*}
\X = \begin{pmatrix} 1 & 0 \\ 0 & -1 \end{pmatrix}\,.
\end{equation*}
We have $\X \circ \X = \Id_{2}$, so $\X \circ \X > 0$. However, $\X$ is neither positive nor negative\,.
\item Assume that $f$ is an increasing matrix monotone function of degree $n$. For all $\X\,, \Y \in \mathbb{H}^{+}_{n}$, we have 
\begin{equation*}
\D f(\A)(\X)\,, \D f(\A)(\Y) \in \mathbb{H}^{+}_{n}\,.
\end{equation*}
Therefore, 
\begin{equation*}
\tr\left(\D f(\A)(\X)\D f(\A)(\Y)\right) \geq 0\,,
\end{equation*}
i.e. $\langle\D f(\A)(\X)\,, \D f(\A)(\Y)\rangle_{\F} \geq 0$\,.
\end{itemize}

\end{remark}

\noindent The preceding results motivate the following conjecture, which connects radial convexity with matrix monotonicity at each finite order.

\begin{conjecture}

Let $f: \left(0, \infty\right) \to \mathbb{R}$. Then $f$ is radially convex of order $n$ (with respect to the Frobenius norm) if and only if $f$ is matrix monotone of degree $n$\,.

\label{Conjecture}

\end{conjecture}

\noindent By Proposition \ref{FromMonotoneToConvex}, matrix monotonicity of degree $n$ implies radial convexity of order $n$. Thus, it remains to prove the converse implication. The next theorem establishes the conjecture for $n=2$.

\begin{theo}

Let $f:(0,\infty)\to\mathbb{R}$ be differentiable and radially convex of order $2$. Then $f$ is matrix monotone of degree $2$. In other words, Conjecture \ref{Conjecture} holds for $n = 2$\,.

\end{theo}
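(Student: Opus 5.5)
The plan is to reduce everything to the $2\times 2$ Loewner matrix and compare its determinant with that of its Hadamard square. By Proposition \ref{PropositionSeptember20V1}, radial convexity of order $2$ gives $\langle \D f(\A)(\X),\D f(\A)(\Y)\rangle_{\F}\geq 0$ for all $\A\in\mathbb{P}_2$ and $\X,\Y\geq 0$. Lemma \ref{LemmaAtoD} reduces to diagonal $\A=\diag(\lambda_1,\lambda_2)$, since conjugation by a unitary preserves both the Frobenius inner product and the cone $\mathbb{H}^+_2$. Proposition \ref{PropositionSeptember20V2} then yields $\L\circ\L\geq 0$, where $\L=\L_f(\A)$. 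By Remark \ref{RemarkMatrixMonotone}, it suffices to show that $\L\geq 0$ for every such $\A$, or $\L\leq 0$ for every $\A$, with one sign working uniformly. That $f'$ is defined makes $\L$ meaningful; I will use only differentiability, not continuity of $f'$, where possible.

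First I fix the global direction. By Proposition \ref{PropositionMonotoneDegreeOne}, $f$ is monotone on $(0,\infty)$; replacing $f$ by $-f$ (which leaves $\d_f$ unchanged) I assume $f$ is nondecreasing. Then $f'(\lambda)\geq 0$ for all $\lambda$, and every divided difference $f^{[1]}(\lambda_1,\lambda_2)\geq 0$. Hence the diagonal entries of $\L$ are nonnegative, and the goal is $\L\geq 0$.

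Now write
\[
\L=\begin{pmatrix} a & c\\ c & b\end{pmatrix},\qquad a=f'(\lambda_1),\ b=f'(\lambda_2),\ c=f^{[1]}(\lambda_1,\lambda_2),
\]
with $a,b\geq 0$. Then
\[
\L\circ\L=\begin{pmatrix} a^2 & c^2\\ c^2 & b^2\end{pmatrix}\geq 0
\]
forces $a^2b^2\geq c^4$, and since $ab\geq 0$ this gives $ab\geq c^2$. Together with $a,b\geq 0$, this is exactly $\L\geq 0$, because a real symmetric $2\times2$ matrix with nonnegative diagonal and nonnegative determinant is positive semidefinite. Note that we need $a,b\geq0$ here: the example $\diag(1,-1)$ in the preceding remark shows that $\L\circ\L\geq 0$ alone does not suffice. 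That sign information is supplied by the degree-$1$ monotonicity. The case $\lambda_1=\lambda_2$ is trivial, since then all entries equal $a\geq 0$.

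The main obstacle is ensuring the sign choice is uniform, so that no point has $f'>0$ while another forces $\L\leq 0$. Proposition \ref{PropositionMonotoneDegreeOne} handles this, because it gives a single global monotonicity direction, so all $f'(\lambda)$ share one sign. A subtle point is whether that proposition's conclusion, that $f$ preserves betweenness, indeed yields global monotonicity. This is the standard fact that a continuous (differentiable) betweenness-preserving function on an interval is monotone, and I would cite or briefly verify it. Having $\L_f(\A)\geq 0$ for all $\A\in\mathbb{P}_2$, Remark \ref{RemarkMatrixMonotone} concludes that $f$ is matrix monotone increasing of degree $2$, and for the original function, monotone in one of the two directions.
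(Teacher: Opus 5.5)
Your proof is correct and is essentially the paper's argument. Both use degree-$1$ monotonicity to get $ab\geq 0$, then use the factorization $a^2b^2-c^4=(ab-c^2)(ab+c^2)$ to pass from $\L\circ\L\geq 0$ to $\L\geq 0$. The paper instead argues by contraposition and exhibits the witness $\U=\J$, $\V=vv^{t}$ explicitly, re-deriving inline the needed direction of Proposition \ref{PropositionSeptember20V2}; you cite that proposition and argue directly, which makes the uniform choice of sign more transparent.
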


\begin{proof}

We argue by contraposition. By Proposition \ref{PropositionMonotoneDegreeOne}, radial convexity of order $2$ implies that $f$ is monotone on $(0,\infty)$.  Suppose that $f$ is not matrix monotone of degree $2$. Then there exist $\lambda_{1}\,,\lambda_{2}>0$, with $\lambda_{1} \neq \lambda_{2}$, such that 
\begin{equation*}
ab - \Omega^{2} < 0\,,
\end{equation*}
with $a := f'\left(\lambda_{1}\right)\,, b := f'\left(\lambda_{2}\right)\,,$ and $\Omega := \frac{f\left(\lambda_{1}\right) - f\left(\lambda_{2}\right)}{\lambda_{1}-\lambda_{2}}$.

\noindent Since $f$ is monotone, $a$ and $b$ have the same sign, and therefore $ab \geq 0$. Let
\begin{equation*}
\A =  \begin{pmatrix} \lambda_{1} & 0 \\ 0 & \lambda_{2} \end{pmatrix} \quad \text{ and } \quad \L := \L_{f}\left(\diag(\lambda_{1}, \lambda_{2})\right) = \begin{pmatrix} a & \Omega \\ \Omega & b \end{pmatrix}\,.
\end{equation*}
Recall that for every Hermitian matrix $\H$, we have
\begin{equation*}
\D f(\A)(\H) = \L \circ \H\,.
\end{equation*}
In particular,
\begin{equation*}
\L \circ \L = \begin{pmatrix} a^{2} & \Omega^{2} \\ \Omega^{2} & b^{2} \end{pmatrix}\,,
\end{equation*}
so
\begin{equation*}
\det\left(\L \circ \L\right) =  a^{2}b^{2} - \Omega^{4} = \left(ab-\Omega^{2}\right)\left(ab+\Omega^{2}\right) < 0\,.
\end{equation*}
Hence $\L \circ \L$ is indefinite. Therefore, there exists a vector $v = \begin{pmatrix} v_{1} \\ v_{2} \end{pmatrix} \in \mathbb{R}^{2}$ such that 
\begin{equation*}
v^{t} \left(\L \circ \L\right)v < 0\,.
\end{equation*}
Let $u = \begin{pmatrix} u_{1} \\ u_{2} \end{pmatrix} \in \mathbb{R}^{2}$ and let 
\begin{equation*}
\U :=uu^{t} = \begin{pmatrix} u^{2}_{1}& u_{1}u_{2} \\ u_{1}u_{2} & u^{2}_{2} \end{pmatrix}\,, \qquad \V:=vv^{t} = \begin{pmatrix}
v^{2}_{1} & v_{1}v_{2} \\ v_{1}v_{2} & v^{2}_{2} \end{pmatrix}\,.
\end{equation*}
Clearly, $\U\,, \V \geq 0$ (determinant is $0$, and hermitian because the coefficients are real). Since
\begin{equation*}
\D f(\A)(\U) = \L \circ \U \quad \text{ and } \quad \D f(\A)(\V) = \L \circ \V\,,
\end{equation*}
we obtain
\begin{equation*}
\langle \D f(\A)(\U)\,, \D f(\A)(\V) \rangle_{\F} = \sum\limits_{i,j = 1}^{2} \L^{2}_{ij}u_{i}u_{j}v_{i}v_{j} = \left(u \circ v\right)^{t} \left(\L \circ \L\right)\left(u\circ v\right)\,.
\end{equation*}
Take $u_{1} = u_{2} = 1$, i.e.
\begin{equation*}
\U =  \begin{pmatrix} 1 & 1 \\ 1 & 1 \end{pmatrix} \quad \text{ and } \quad u \circ v = v\,.
\end{equation*}
so it follows that
\begin{equation*}
\langle \D f(\A)(\U)\,, \D f(\A)(\V)\rangle_{\F} = v^{t} \left(\L \circ \L\right)v <0\,.
\end{equation*}
In particular, it follows that $f$ is not radially convex of order $2$ (see Proposition \ref{PropositionSeptember20V1}), contradicting our initial hypothesis on $f$. Therefore $f$ is matrix monotone of degree $2$.

\end{proof}

\begin{remark}

The proof of the conjecture for $n=2$ relies strongly on the particular form of the determinant of a $2 \times 2$ matrix. Indeed, if
\begin{equation*}
\L = \begin{pmatrix} a & b \\ b & c \end{pmatrix}\,,
\end{equation*}
then 
\begin{equation*}
\det(\L \circ \L) = a^{2}c^{2}-b^{4} = \left(ac-b^{2}\right)\left(ac+b^{2}\right) = \det(\L)\left(ac+b^{2}\right)\,.
\end{equation*}
Therefore, once the signs of the diagonal entries are known, the positive semidefiniteness of $\L\circ\L$ provides direct information about the sign of $\det(\L)$. This is the key observation used in the proof of the conjecture in degree $2$.

\noindent The situation becomes considerably more subtle in degree $3$. Let
\begin{equation*}
\L= \begin{pmatrix} a & x & y \\ x & b & z \\ y & z & c \end{pmatrix}\,.
\end{equation*}
A direct computation gives
\begin{equation*}
\det(\L) = abc + 2xyz - az^{2} - by^{2} - cx^{2}\,,
\end{equation*}
whereas
\begin{equation*}
\det(\L\circ\L) = a^{2}b^{2}c^{2} +2x^{2}y^{2}z^{2} -a^{2}z^{4} -b^{2}y^{4} -c^{2}x^{4}\,.
\end{equation*}
In contrast with the $2\times2$ case, there is no analogous factorization involving $\det(\L)$. More importantly, the Hadamard square loses information concerning the signs of the off-diagonal entries. In particular, the term $2xyz$ appearing in $\det(\L)$ is replaced by $2x^{2}y^{2}z^{2}$ in $\det(\L\circ\L)$. Thus, the positivity of $\L \circ \L$ does not, for a general symmetric matrix, determine the sign of $\det(\L)$.

\end{remark}

\begin{remark}

Suppose that $f$ is radially convex of order $n$. Then, by Propositions
\ref{PropositionSeptember20V1} and \ref{PropositionSeptember20V2}, for every
diagonal matrix $\A\in\mathbb P_n$, we have $\L_{f}(\A) \circ \L_{f}(\A) \geq 0$. Let $1 \leq m \leq n$ and let $\B = \diag\left(\lambda_{1}\,, \ldots\,, \lambda_{m}\right) \in \mathbb{P}_{m}$. Choose $\lambda_{m+1},\ldots,\lambda_{n}>0$ and consider $\A = \diag\left(\lambda_{i}\right)_{1 \leq i \leq n} \in \mathbb{P}_{n}$. By construction, $\L_{f}(\B)$ is the leading principal submatrix of size $m$ of
$\L_{f}(\A)$. Therefore, $\L_{f}(\B) \circ \L_{f}(\B)$ is the corresponding principal submatrix of $\L_{f}(\A) \circ \L_{f}(\A)$. Since every principal submatrix of a positive semidefinite matrix is positive semidefinite, we obtain
\begin{equation*}
\L_{f}(\B)\circ\L_{f}(\B)\geq0\,.
\end{equation*}
However, this argument alone does not imply that $f$ is radially convex of order $m$, since the positivity of $\L_{f}(\B)\circ\L_{f}(\B)$ has only been shown to be a necessary condition for radial convexity. This provides a weaker version of
Proposition \ref{PropositionRCNToM}, but shows directly how the corresponding
Loewner matrices behave when passing to a lower degree.

\end{remark}

\noindent The previous remark shows that the argument used for $n=2$ does not extend directly to $n=3$. Nevertheless, radial convexity already provides most of the inequalities required for the positivity of the corresponding Loewner matrix. More precisely, we have the following result.

\begin{proposition}

Let $f: \left(0, \infty\right) \to \mathbb{R}$ be differentiable and increasing, and assume that $f$ is radially convex of order $3$. Then $f$ is matrix monotone of degree $3$ if and only if
\begin{equation*}
\det\left(\L_{f}(\A)\right) \geq 0
\end{equation*}
for every $\A \in \mathbb{P}_{3}$\,.

\end{proposition}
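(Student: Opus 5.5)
The plan is to reduce matrix monotonicity of degree $3$ to the positivity of every Loewner matrix $\L_{f}(\A)$, $\A \in \mathbb{P}_{3}$, as in Remark \ref{RemarkMatrixMonotone}. We then check this positivity through principal minors, showing that radial convexity already controls every minor except the full determinant. The direction ``matrix monotone $\Rightarrow \det \geq 0$'' is immediate: if $f$ is matrix monotone increasing of degree $3$, then $\L_{f}(\A) \geq 0$, so all its principal minors, and in particular its determinant, are nonnegative. The work is in the converse.

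For the converse, we fix $\A \in \mathbb{P}_{3}$, which we may take diagonal by Remark \ref{LoewnerMatrixA}, and write $\L := \L_{f}(\A)$ with diagonal entries $a,b,c$ and off-diagonal entries $x,y,z$ as in the preceding remark. We use the criterion that a real symmetric matrix is positive semidefinite if and only if \emph{all} its principal minors are nonnegative. Leading principal minors alone do not suffice in the semidefinite case, so we must control every principal minor.

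\begin{itemize}
\item The $1\times 1$ minors are $f'(\lambda_{i}) \geq 0$, because $f$ is differentiable and increasing.
\item The $2\times 2$ principal minors are the determinants of $\L_{f}(\diag(\lambda_{i},\lambda_{j}))$. By Proposition \ref{PropositionRCNToM}, $f$ is radially convex of order $2$, and by the theorem proved for $n=2$ it is matrix monotone of degree $2$. Hence each $2\times 2$ Loewner matrix is positive semidefinite, giving $ab - x^{2} \geq 0$, $ac - y^{2} \geq 0$ and $bc - z^{2} \geq 0$. When $\lambda_{i} = \lambda_{j}$, the submatrix is $f'(\lambda_{i})\J$, which is positive semidefinite directly.
\item The $3\times 3$ minor is $\det(\L) \geq 0$, which is exactly the hypothesis.
\end{itemize}

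Hence $\L \geq 0$ for every $\A \in \mathbb{P}_{3}$, and Remark \ref{RemarkMatrixMonotone} gives matrix monotonicity of degree $3$.

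The main subtlety is insisting on all principal minors rather than only the leading ones. Repeated eigenvalues make this essential, since then $\L$ can be singular with zero leading minors while still being indefinite. This is why we invoke the degree $2$ result on every pair $(\lambda_{i},\lambda_{j})$, not just on the leading pair. A secondary point is that Remark \ref{RemarkMatrixMonotone} is stated for continuously differentiable $f$, so if only differentiability is assumed we would need to note that the Daleckii-Krein characterization still applies, or accept that the paper's standing hypotheses include $C^{1}$.
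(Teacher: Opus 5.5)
Your proof is correct and has the same architecture as the paper's. Positivity of $\L_{f}(\D)$ is checked through all principal minors: the $1\times 1$ minors come from $f'\geq 0$, the $3\times 3$ minor from the hypothesis, and the $2\times 2$ minors from radial convexity.

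The only difference is how you get the $2\times 2$ minors. You descend to order $2$ with Proposition~\ref{PropositionRCNToM} and invoke the degree-$2$ theorem. The paper instead stays in order $3$: it uses Propositions~\ref{PropositionSeptember20V1} and~\ref{PropositionSeptember20V2} to get $\L_{f}(\D)\circ\L_{f}(\D)\geq 0$. It then reads off, for instance, $a_1^2a_2^2-x^4=(a_1a_2-x^2)(a_1a_2+x^2)\geq 0$ from a $2\times 2$ principal minor, which together with $a_1,a_2\geq 0$ gives $a_1a_2-x^2\geq 0$.

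The degree-$2$ theorem is itself proved by exactly this factorization, so the two arguments coincide in substance. The paper's version is more direct. It is a purely matrix-level statement about one $3\times 3$ matrix (nonnegative diagonal, $\L\circ\L\geq 0$ and $\det\L\geq 0$ force $\L\geq 0$). It also avoids your round trip from Loewner positivity to matrix monotonicity of degree $2$ and back. Yours is more modular.

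One small point to state explicitly: the degree-$2$ theorem gives monotonicity in either direction. Since $f$ is increasing, it is the increasing direction unless $f$ is constant, so the $2\times 2$ Loewner matrices are indeed positive semidefinite. Finally, your caveat about $C^{1}$ versus mere differentiability applies equally to the paper's own appeal to the Loewner characterization, so it is not a defect specific to your proposal.
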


\begin{remark}

Recall that a Hermitian matrix is positive semidefinite if and only if all of its principal minors are nonnegative. More precisely, if
\begin{equation*}
\A = \begin{pmatrix} a & x & y \\ \overline{x} & b & z \\ \overline{y} & \overline{z} & c \end{pmatrix} \in \mathbb{H}_{3}\,,
\end{equation*}
then its principal minors of order $1$ are
\begin{equation*}
a\,,\qquad b\,,\qquad c\,,
\end{equation*}
its principal minors of order $2$ are
\begin{equation*}
ab-\left|x\right|^{2}\,,\qquad ac-\left|y\right|^{2}\,, \qquad bc- \left|z\right|^{2}\,,
\end{equation*}
and its unique principal minor of order $3$ is $\det(\A)$. In particular, $\A \geq 0$ if and only if all seven of these quantities are nonnegative.

\label{RemarkMinorsPSD}

\end{remark}

\begin{proof}

The direct implication follows immediately from Remark \ref{RemarkMatrixMonotone}. Assume that $\det\left(\L_{f}(\A)\right) \geq 0$ for every $\A \in \mathbb{P}_{3}$. The goal is to show that $f$ is matrix monotone of degree $3$.

\noindent Let $\D = \diag\left(\lambda_{1}\,,\lambda_{2}\,,\lambda_{3}\right) \in \mathbb{P}_{3}$ and let 
\begin{equation*}
\L_{f}(\D)= \begin{pmatrix} a_{1} & x & y \\ x & a_{2} & z \\ y & z & a_{3} \end{pmatrix}
\end{equation*}
be the corresponding Loewner matrix (with $a_{i} = f'(\lambda_{i}), 1 \leq i \leq 3$). Since $f$ is increasing, 
\begin{equation*}
f'(\lambda_{i}) \geq 0\,, \qquad \left(1 \leq i \leq 3\right)\,,
\end{equation*}
i.e. all the principal minors of order $1$ of $\L_{f}(\D)$ are nonnegative.

\noindent On the other hand, since $f$ is radially convex of order $3$, it follows from Propositions \ref{PropositionSeptember20V1} and \ref{PropositionSeptember20V2} that 
\begin{equation*}
\L_f(\D)\circ\L_f(\D)
=
\begin{pmatrix}
a_1^2 & x^2 & y^2\\
x^2 & a_2^2 & z^2\\
y^2 & z^2 & a_3^2
\end{pmatrix}\geq0.
\end{equation*}
Therefore, all of its principal minors of order $2$ are nonnegative, and hence
\begin{equation*}
a_1^2a_2^2-x^4\geq0,\qquad
a_1^2a_3^2-y^4\geq0,\qquad
a_2^2a_3^2-z^4\geq0.
\end{equation*}
Since
\begin{equation*}
a^{2}_{1}a^{2}_{2}-x^{4} = \left(a_{1}a_{2}-x^{2}\right)\left(a_{1}a_{2}+x^{2}\right)
\end{equation*}
and $a_{1}, a_{2}\geq 0 $, it follows that $a_{1}a_{2} - x^{2} \geq 0$. Similarly,
\begin{equation*}
a_{1}a_{3} - y^{2} \geq 0 \qquad \text{and} \qquad a_{2}a_{3}-z^{2} \geq 0\,.
\end{equation*}
Thus, all the principal minors of order $2$ of $\L_{f}(\D)$ are nonnegative.

\noindent Finally, by assumption,
\begin{equation*}
\det\left(\L_{f}(\D)\right)\geq0\,.
\end{equation*}
Hence all the principal minors of $\L_{f}(\D)$ are nonnegative. It follows from Remark \ref{RemarkMinorsPSD} that $\L_{f}(\D) \geq 0$. Since $\D\in\mathbb P_3$ was arbitrary, the Loewner characterization implies that $f$ is matrix monotone of degree $3$.

\end{proof}

\begin{remark}

Let $\widetilde{f}_{2}: \left(0, \infty\right) \to \left(0, \infty\right)$ be the function introduced in Example \ref{ExampleMonotone}, i.e.
\begin{equation*}
\widetilde{f}_{2}(x) = \frac{x}{\sqrt{2}(1+x)} + \frac{1}{3}\left(\frac{x}{\sqrt{2}(1+x)}\right)^{3}\,.
\end{equation*}
The function $\widetilde{f}_{2}$ is matrix monotone (and increasing) of degree $2$ but not $3$. It follows from Proposition \ref{FromMonotoneToConvex} that $\widetilde{f}_{2}$ is radially convex of order $2$. However, one can see that the function $\widetilde{f}_{2}$ is not radially convex of order 3. Indeed, let $\D_{3} = \diag\left(1\,, 2\,, 3\right)$. The Loewner matrix associated with $\widetilde{f}_{2}$ and $\D_{3}$ is given by
\begin{equation*}
\L_{\widetilde{f}_{2}}(\D_{3}) = \begin{pmatrix} \frac{9\sqrt{2}}{64} & \frac{253\sqrt{2}}{2592} & \frac{115\sqrt{2}}{1536} \\ \frac{253\sqrt{2}}{2592} & \frac{11\sqrt{2}}{162} & \frac{1081\sqrt{2}}{20736} \\ \frac{115\sqrt{2}}{1536} & \frac{1081\sqrt{2}}{20736} & \frac{41\sqrt{2}}{1024} \end{pmatrix}\,.
\end{equation*}
Then
\begin{equation*}
\det\left(\L_{\widetilde{f}_{2}}(\D_{3})\circ\L_{\widetilde{f}_{2}}(\D_{3})\right) = -\frac{118534087}{851946664988327804928} < 0\,.
\end{equation*}
Therefore, the matrix $\L_{\widetilde{f}_{2}}(\D_{3})\circ\L_{\widetilde{f}_{2}}(\D_{3})$ is not positive semidefinite, so $\widetilde{f}_{2}$ is not radially convex of order $3$.

\noindent Similarly, let $\widetilde{f}_{3}: \left(0, \infty\right) \to \left(0, \infty\right)$ be the function introduced in Example \ref{ExampleMonotone}, i.e.
\begin{equation*}
\widetilde{f}_{3}(x) = \frac{x}{5(1+x)} + \frac{1}{3}\left(\frac{x}{5(1+x)}\right)^{3} + \frac{1}{5}\left(\frac{x}{5(1+x)}\right)^{5}\,.
\end{equation*}
Recall that $\widetilde{f}_{3}$ is matrix monotone of degree $3$, but not of degree $4$, on $(0,\infty)$. Consider $\D_{4}=\diag\left(1\,,2\,,3\,,4\right)$. A direct computation gives
\begin{equation*}
\det\left(\L_{\widetilde{f}_{3}}(\D_{4}) \circ \L_{\widetilde{f}_{3}}(\D_{4})\right) < 0\,.
\end{equation*}
Therefore, the matrix $\L_{\widetilde{f}_{3}}(\D_{4}) \circ \L_{\widetilde{f}_{3}}(\D_{4})$ is not positive semidefinite, and hence $\widetilde{f}_{3}$ is not radially convex of order $4$.

\end{remark}

\section{Concluding remarks and future directions}

\label{SectionTheEnd}

The main purpose of this paper was to investigate the relationship between matrix monotonicity and radial convexity on the cone of positive definite matrices. In Section~\ref{SectionMtoRC}, we showed that matrix monotonicity implies radial convexity for a large family of metrics. In particular, this is the case for metrics of the form
\begin{equation*}
\d_{\left\|\cdot\right\|,f}(\A,\B) = \left\|f(\A)-f(\B)\right\|\,,
\end{equation*}
where $\left\|\cdot\right\|$ is a unitarily invariant norm and $f$ is an appropriate matrix monotone function. We also established radial convexity for several other natural metrics on the cone of positive definite matrices.

\bigskip

\noindent In Section~\ref{SectionRCtoM}, we considered the converse problem for the metric induced by the Frobenius norm. This led us to conjecture that, for every $n \geq 1$, radial convexity of order $n$ is equivalent to matrix monotonicity of degree $n$. We proved this conjecture  for $n=2$. More generally, we showed that radial convexity imposes a positivity condition on the Hadamard square of the corresponding Loewner matrix, providing a direct link between the metric geometry of the positive definite cone and the classical Loewner characterization of matrix monotonicity.

\bigskip

\noindent These results suggest several directions for future work. A first natural problem is to investigate the converse implication for arbitrary unitarily invariant norms. More precisely, given a unitarily invariant norm $\left\|\cdot\right\|$, one may ask whether radial convexity with respect to $\d_{\|\cdot\|,f}$ characterizes matrix monotonicity of $f$ at each finite degree. In the case of the Frobenius norm, the differential characterization obtained in Section \ref{SectionRCtoM} leads to a condition involving the Hadamard square of the Loewner matrix. It would be interesting to determine the corresponding infinitesimal condition for other unitarily invariant norms and to obtain an analogue of the implications established in that section.

\bigskip

\noindent A second direction is to move beyond metrics induced by unitarily invariant norms and investigate similar phenomena for other natural geometries on the positive definite cone. The broader objective is to develop alternative geometric and metric characterizations of matrix monotonicity. In particular, finite-degree matrix monotonicity exhibits a subtle dependence on the matrix size: a function may be matrix monotone of degree $n$ but fail to be matrix monotone of degree $n+1$. Understanding radial convexity for different metrics may provide new ways to identify the mechanisms responsible for this loss of monotonicity and, more generally, to better understand the distinction between the different degrees of matrix monotonicity.

\bigskip

\noindent Returning to the first direction, we conclude by showing that the equivalence obtained in degree $2$ for the Frobenius norm extends, under a mild non-degeneracy assumption, to every unitarily invariant norm. In Theorem \ref{TheoremUnitaryInvariant}, we show that this additional assumption can in fact be removed. 

\begin{proposition}

Let $f: \left(0, \infty\right) \to \left(0, \infty\right)$ be differentiable and increasing, and let $\left\|\cdot\right\|$ be a unitarily invariant norm. Assume that $f$ is not matrix monotone of degree $2$. Suppose moreover that there exist $a \neq b$ such that $\L_{f}(a,b)$ is not positive semidefinite and
\begin{equation*}
f'(a)f'(b)>0\,.
\end{equation*}
Then there exist $\A, \B, \C \in \mathbb{P}_{2}$ such that
\begin{equation*}
\A \leq \C \leq \B \qquad \text{ and } \qquad \left\|f(\A) - f(\C)\right\| > \left\|f(\A) - f(\B)\right\|\,.
\end{equation*}

\end{proposition}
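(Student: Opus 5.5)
The plan is to build a counterexample to radial convexity at first order, exploiting the gap between $f'(a)f'(b)$ and $\Omega^2$, where $\Omega:=f^{[1]}(a,b)$. The perturbation is chosen so that \emph{both} singular values of the relevant difference shrink by a fixed factor; that makes the strict inequality hold for every unitarily invariant norm at once.

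\textbf{Setup.} Put $p:=f'(a)>0$, $q:=f'(b)>0$ and $\A:=\diag(a,b)$. Then
\begin{equation*}
\L:=\L_f(\A)=\begin{pmatrix} p & \Omega\\ \Omega & q\end{pmatrix},
\end{equation*}
and the hypothesis gives $\kappa:=\Omega^2/(pq)>1$, hence $\Omega\neq 0$. By Theorem \ref{TheoremDifferentialLoewner}, $\D f(\A)(\H)=\L\circ\H$. Fix $c$ with $1<c\leq\sqrt{\kappa}$ and set
\begin{equation*}
\X:=\begin{pmatrix} 1/p & c/\Omega\\ c/\Omega & 1/q\end{pmatrix},\qquad
\M:=\L\circ\X=\begin{pmatrix} 1 & c\\ c & 1\end{pmatrix}.
\end{equation*}
Here $\X\geq 0$ because $\det\X=1/(pq)-c^2/\Omega^2\geq 0$. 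The eigenvalues of $\M$ are $1+c>0$ and $1-c<0$, so its singular values are $c+1$ and $c-1$. Next choose $\alpha>0$ and $\beta$ with $1<\beta/\alpha<\sqrt{\kappa}$, and set
\begin{equation*}
\Y:=\begin{pmatrix} \alpha/p & -\beta/\Omega\\ -\beta/\Omega & \alpha/q\end{pmatrix},\qquad
\W:=\L\circ\Y=\begin{pmatrix} \alpha & -\beta\\ -\beta & \alpha\end{pmatrix}.
\end{equation*}
Again $\Y\geq 0$, since $\alpha^2/(pq)\geq\beta^2/\Omega^2$ is equivalent to $\beta/\alpha\leq\sqrt{\kappa}$.

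\textbf{Uniform contraction.} The matrices $\M$ and $\W$ commute (both have eigenvectors $(1,\pm1)$). Hence the eigenvalues of $\M+s\W$ are
\begin{equation*}
1+c+s(\alpha-\beta)\qquad\text{and}\qquad 1-c+s(\alpha+\beta).
\end{equation*}
For small $s>0$ the singular values are therefore $(c+1)-s(\beta-\alpha)$ and $(c-1)-s(\alpha+\beta)$. Both are strictly smaller than those of $\M$, so there is $\delta>0$ with $\sigma_i(\M+s\W)\leq(1-\delta)\sigma_i(\M)$ for $i=1,2$. A unitarily invariant norm is a symmetric gauge function of the singular values, and is monotone in them. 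This yields
\begin{equation*}
\left\|\M+s\W\right\|\leq(1-\delta)\left\|\M\right\|<\left\|\M\right\|.
\end{equation*}

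\textbf{Passing to finite matrices.} Set $\C:=\A+t\X$ and $\B:=\A+t(\X+s\Y)$. For small $t>0$ both lie in $\mathbb{P}_2$, and $\A\leq\C\leq\B$. Fréchet differentiability gives
\begin{equation*}
f(\C)-f(\A)=t\M+o(t),\qquad f(\B)-f(\A)=t(\M+s\W)+o(t).
\end{equation*}
Dividing by $t$ and using continuity of the norm together with the strict inequality above, we get $\|f(\A)-f(\C)\|>\|f(\A)-f(\B)\|$ for $t$ small enough, as required.

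\textbf{Expected obstacle.} The delicate step is ensuring strictness for an \emph{arbitrary} unitarily invariant norm, for instance the operator norm, which sees only $\sigma_1$. Shrinking only one singular value would not suffice. This is exactly why $c>1$ is taken, making $\M$ indefinite, and why $\beta>\alpha$ is required: together they force both singular values down. The room for these choices, $\sqrt{\kappa}>1$, comes precisely from the failure of positivity of $\L$. The assumption $f'(a)f'(b)>0$ is what allows division by $p$ and $q$ in defining $\X$ and $\Y$.
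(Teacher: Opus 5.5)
Your proof is correct and is essentially the paper's argument. With $c=\sqrt{\kappa}$ and $\beta/\alpha=\sqrt{\kappa}$, your $\X$ and $\Y$ are, up to scaling, exactly the paper's rank-one matrices $\Delta\U\Delta$ and $\Delta\V\Delta$; your small $s$ plays the role of the paper's condition $\beta<\alpha(\rho-1)/(\rho+1)$, and the conclusion comes from the same first-order Daleckii--Krein expansion. Your $(1-\delta)$ scaling step also explicitly justifies the strict inequality for an arbitrary unitarily invariant norm, which the paper only asserts.
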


\begin{remark}

For any symmetric matrix $\L$ and any diagonal matrix $\Delta$ with real entries, we have
\begin{equation*}
\L \circ \left(\Delta\H\Delta\right) = \left(\Delta\L\Delta\right) \circ \H\,.
\end{equation*}
    
\end{remark}

\begin{proof}

Let $\A$ be the matrix in $\mathbb{P}_{2}$ given by
\begin{equation*}
\A = \begin{pmatrix} a & 0 \\ 0 & b \end{pmatrix}\,, \qquad \left(a \neq b\right)\,.
\end{equation*}
with $f'(a)f'(b) > 0$ and such that the corresponding matrix $\L:=\L_{f}(\A)$ is not positive semidefinite. Since $f$ is increasing,
\begin{equation*}
f'(a)\,,f'(b)\,,f^{[1]}(a,b) \geq 0\,.
\end{equation*}
Using that $\L$ is not positive semidefinite,
\begin{equation*}
\det(\L) = f'(a)f'(b)-f^{[1]}(a,b)^{2} < 0\,.
\end{equation*}
Let $\Delta$ be the diagonal matrix given by
\begin{equation*}
\Delta= \diag\left(f'(a)^{-1/2}\,, f'(b)^{-1/2}\right)\,.
\end{equation*}
Then
\begin{equation*}
\R(a, b) := \Delta\L\Delta = \begin{pmatrix} 1 & \frac{f^{[1]}(a,b)}{\sqrt{f'(a)f'(b)}} \\ \frac{f^{[1]}(a,b)}{\sqrt{f'(a)f'(b)}} &1 \end{pmatrix}\,.
\end{equation*}
To simplify the notations, let
\begin{equation*}
k = \sqrt{f'(a)f'(b)} \qquad \text{and} \qquad \rho=\frac{f^{[1]}(a,b)}{k}\,, 
\end{equation*}
i.e. 
\begin{equation*}
\R(\rho) := \R(a,b) = \begin{pmatrix} 1 & \rho \\ \rho & 1\end{pmatrix}\,.
\end{equation*}
Note that $\rho > 1$.

\noindent We now consider the vectors
\begin{equation*}
u =  \begin{pmatrix} 1 \\ 1 \end{pmatrix}\,, \qquad v= \begin{pmatrix} 1 \\ -1 \end{pmatrix}\,.
\end{equation*}
and let $\U$ and $\V$ be the rank-one matrices given by
\begin{equation*}
\U := uu^{t} = \begin{pmatrix} 1 & 1 \\ 1 & 1 \end{pmatrix}\,, \qquad \V := vv^{t} = \begin{pmatrix} 1 & -1 \\ -1 & 1 \end{pmatrix}
\end{equation*}
We get
\begin{equation*}
\R(\rho) \circ \U = \begin{pmatrix} 1 & \rho \\ \rho & 1 \end{pmatrix} = \R(\rho) \qquad \R(\rho) \circ \V = \begin{pmatrix} 1 & -\rho \\ -\rho & 1 \end{pmatrix}\,.
\end{equation*}
Let $\P$ and $\Q$ be the matrices given by
\begin{equation*}
\P = \alpha\Delta\U\Delta \qquad \text{and} \qquad \Q = \beta\Delta\V\Delta\,.
\end{equation*}
Since $\U$ and $\V$ are positive semidefinite of rank one, so are $\P$ and $\Q$.

\noindent Let $t>0$ and define
\begin{equation*}
\C_{t} = \A + t\P \qquad \text{ and } \qquad \B_{t} = \A + t\left(\P + \Q\right)\,.
\end{equation*}
Since $\P$ and $\Q$ are positive semidefinite, we have
\begin{equation*}
\A \leq \C_{t} \leq\B_{t}\,.
\end{equation*}

\noindent Using that 
\begin{equation*}
\L \circ \left(\Delta\H\Delta\right) = \left(\Delta\L\Delta\right) \circ \H \qquad \text{ and } \qquad \Delta\L\Delta = \R(\rho)\,,
\end{equation*}
then
\begin{equation*}
\D f(\A)(\P) = \L \circ \left(\alpha\Delta\U\Delta\right) = \alpha\R(\rho) \circ \U = \alpha \begin{pmatrix} 1 & \rho \\ \rho & 1 \end{pmatrix} = \alpha \R(\rho)\,.
\end{equation*}
Similarly,
\begin{equation*}
\D f(\A)(\P+\Q) = \R(\rho) \circ \left(\alpha\U + \beta\V\right) = \begin{pmatrix} \alpha + \beta & \rho(\alpha-\beta) \\ \rho(\alpha-\beta) & \alpha+\beta \end{pmatrix}\,.
\end{equation*}
Since $\rho>1$, the eigenvalues of $\D f(\A)(\P)$ are
\begin{equation*}
\alpha\left(1+\rho\right) \qquad \text{ and } \qquad \alpha\left(1-\rho\right)\,,
\end{equation*}
and hence its singular values are
\begin{equation*}
\alpha(\rho+1)
\qquad\text{and}\qquad
\alpha(\rho-1)\,.
\end{equation*}
On the other hand, the eigenvalues of $\D f(\A)(\P+\Q)$ are
\begin{equation*}
\lambda_{1} = \alpha\left(\rho+1\right) - \beta\left(\rho-1\right) \qquad \text{ and } \qquad \lambda_{2} = -\alpha\left(\rho-1\right) + \beta\left(\rho+1\right)\,.
\end{equation*}
We would like the absolute values of both eigenvalues to be strictly smaller than the corresponding singular values of $\D f(\A)(\P)$. The first eigenvalue decreases as soon as $\beta>0$. For the second one, it is enough to require that it stays negative, so
\begin{equation*}
-\alpha\left(\rho-1\right) + \beta\left(\rho+1\right) < 0\,.
\end{equation*}
We therefore choose $\alpha>0$ and $\beta>0$ such that
\begin{equation*}
0 < \beta < \alpha \frac{\rho-1}{\rho+1}\,.
\end{equation*}
With this choice, $\lambda_{1} > 0$ and $\lambda_{2} < 0$, and the singular values of $\D f(\A)(\P+\Q)$ are
\begin{equation*}
\alpha(\rho+1)-\beta(\rho-1) \qquad \text{ and } \qquad \alpha(\rho-1)-\beta(\rho+1)\,.
\end{equation*}
Therefore
\begin{equation*}
s_{j}\left(\D f(\A)(\P+\Q)\right) < s_{j}\left(\D f(\A)(\P)\right)\,, \qquad \left(j = 1, 2\right)\,.
\end{equation*}
Since $\left\|\cdot\right\|$ is unitarily invariant, it follows that
\begin{equation*}
\left\|\D f(\A)(\P+\Q)\right\| < \left\|\D f(\A)(\P)\right\|\,.
\end{equation*}
\noindent Using that
\begin{equation*}
f(\C_{t}) - f(\A) = t\D f(\A)(\P)+ o(t)
\end{equation*}
and 
\begin{equation*}
f(\B_{t}) - f(\A) = t\D f(\A)(\P+\Q) + o(t)\,.
\end{equation*}
we get
\begin{equation*}
\left\|f(\C_{t})-f(\A)\right\| = t\left\|\D f(\A)(\P)\right\| + o(t)
\end{equation*}
and 
\begin{equation*}
\left\|f(\B_{t}) - f(\A)\right\| = t\left\|\D f(\A)(\P+\Q)\right\| + o(t)\,.
\end{equation*}
Finally, since
\begin{equation*}
\left\|\D f(\A)(\P+\Q)\right\| < \left\|\D f(\A)(\P)\right\|\,,
\end{equation*}
for all $t>0$ small enough, we obtain
\begin{equation*}
\left\|f(\A)-f(\B_{t})\right\| < \left\|f(\A)-f(\C_{t})\right\|\,,
\end{equation*}
this proves the result.

\end{proof}

\begin{remark}

An analogous result holds when $f$ is differentiable and decreasing; the proof is similar.

\end{remark}

\begin{theo}

Let $f: \left(0, \infty\right) \to \mathbb{R}$ be differentiable, and let $\|\cdot\|$ be a unitarily invariant norm. Then $f$ is radially convex of order $2$ with respect to the metric $\d_{\|\cdot\|,f}$ if and only if $f$ is matrix monotone of degree $2$.
\label{TheoremUnitaryInvariant}

\end{theo}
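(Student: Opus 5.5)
The forward implication is already available. A unitarily invariant norm is monotone on $\mathbb{H}^{+}_{2}$: if $0 \leq \X \leq \Y$, then $s_{j}(\X) \leq s_{j}(\Y)$ for every $j$, and symmetric gauge functions are monotone. Hence Proposition \ref{FromMonotoneToConvex} shows that matrix monotonicity of degree $2$ implies radial convexity of order $2$ for $\d_{\|\cdot\|,f}$.

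For the converse, I would first observe that the proof of Proposition \ref{PropositionMonotoneDegreeOne} only uses $f(c\Id_{2}) = f(c)\Id_{2}$ and homogeneity of the norm. So it works verbatim for any unitarily invariant norm, and $f$ is monotone on $(0,\infty)$. Replacing $f$ by $-f$ does not change $\d_{\|\cdot\|,f}$, so I may assume $f$ is increasing; then $f' \geq 0$ and $f^{[1]} \geq 0$. Now suppose $f$ is not matrix monotone of degree $2$. By Remark \ref{RemarkMatrixMonotone} there exist $a \neq b$ with $f'(a)f'(b) - \Omega^{2} < 0$, where $\Omega = f^{[1]}(a,b) > 0$. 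If $f'(a)f'(b) > 0$, the preceding Proposition gives $\A \leq \C \leq \B$ in $\mathbb{P}_{2}$ violating radial convexity, and we are done. (That proposition assumes $f > 0$, but its proof never uses this.)

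The remaining, degenerate case $f'(a)f'(b) = 0$ is the main obstacle. Say $f'(a) = 0$ and $f'(b) = b' \geq 0$, so that
\begin{equation*}
\L = \begin{pmatrix} 0 & \Omega \\ \Omega & b' \end{pmatrix}.
\end{equation*}
The rescaling by $\Delta$ is no longer available, so I would build the competitors directly. Take $\A = \diag(a,b)$, $\P = ww^{t}$ with $w = (1,\varepsilon)^{t}$, and $\Q = yy^{t}$ with $y = (c,-\eta)^{t}$, where $c\eta = \varepsilon$ and $\eta = \varepsilon$. Then
\begin{equation*}
\L \circ \P = \begin{pmatrix} 0 & \Omega\varepsilon \\ \Omega\varepsilon & b'\varepsilon^{2} \end{pmatrix}, \qquad \L \circ (\P+\Q) = \begin{pmatrix} 0 & 0 \\ 0 & 2b'\varepsilon^{2} \end{pmatrix}.
\end{equation*}
The singular values of $\L \circ \P$ are $\Omega\varepsilon + O(\varepsilon^{2})$ (both of them). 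Those of $\L \circ (\P+\Q)$ are $2b'\varepsilon^{2}$ and $0$. Hence for $\varepsilon$ small there is $\theta < 1$ with $s_{j}(\L \circ (\P+\Q)) \leq \theta\, s_{j}(\L \circ \P)$ for $j = 1,2$.

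Strictness of the norm inequality is then clean. By monotonicity and homogeneity of the symmetric gauge function,
\begin{equation*}
\left\|\D f(\A)(\P+\Q)\right\| \leq \theta\left\|\D f(\A)(\P)\right\| < \left\|\D f(\A)(\P)\right\|,
\end{equation*}
where the last step uses $\D f(\A)(\P) \neq 0$ because $\Omega > 0$. The case $f'(b) = 0$ is symmetric.

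Finally, I would conclude exactly as in the preceding Proposition. Set $\C_{t} = \A + t\P$ and $\B_{t} = \A + t(\P+\Q)$, so that $\A \leq \C_{t} \leq \B_{t}$. Use the Daleckii-Krein formula (Theorem \ref{TheoremDifferentialLoewner}) and the expansions $f(\C_{t}) - f(\A) = t\D f(\A)(\P) + o(t)$ and $f(\B_{t}) - f(\A) = t\D f(\A)(\P+\Q) + o(t)$, which hold by Fréchet differentiability. These give $\|f(\A) - f(\B_{t})\| < \|f(\A) - f(\C_{t})\|$ for small $t > 0$, contradicting radial convexity. This establishes the theorem without the non-degeneracy hypothesis.
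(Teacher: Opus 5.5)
Your proof is correct. It matches the paper's argument in the forward direction, in the reduction to an increasing $f$ via degree-$1$ monotonicity, and in the nondegenerate case $f'(a)f'(b)>0$, which you hand to the preceding proposition exactly as the paper does. The two arguments diverge only in the degenerate case $f'(a)f'(b)=0$.

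The paper keeps the proposition as a black box and moves the base point instead. By the mean value theorem there is $c\in(a,b)$ with $f'(c)=f^{[1]}(a,b)>0$. Darboux's theorem then produces $a_\varepsilon$ (and, if needed, $b_\varepsilon$) with $f'(a_\varepsilon)=\varepsilon$, while the divided difference stays bounded away from $0$. So a failing pair with strictly positive derivatives always exists.

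You instead stay at the original $\A=\diag(a,b)$ and choose rank-one directions with $\P+\Q=\diag(2,2\varepsilon^{2})$. Then $\D f(\A)(\P+\Q)=\diag(0,2f'(b)\varepsilon^{2})$ is $O(\varepsilon^{2})$, while both singular values of $\D f(\A)(\P)$ are of order $\Omega\varepsilon$. This witness is the formal $\rho=\infty$, $\beta=\alpha$ case of the proposition's construction. Indeed, after undoing the rescaling by $\Delta$, the proposition's $\P$ and $\Q$ are multiples of $(1,\varepsilon)(1,\varepsilon)^{t}$ and $(1,-\varepsilon)(1,-\varepsilon)^{t}$ with $\varepsilon=\sqrt{f'(a)/f'(b)}$. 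In your version $\varepsilon$ becomes a free parameter.

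What your route buys:
\begin{itemize}
\item It is more elementary and self-contained: no mean value theorem or Darboux, and the witness sits at the original failing pair.
\item Since $\B_t$ is diagonal, $f(\B_t)-f(\A)$ is computed exactly.
\item Your $\theta<1$ argument makes the strict norm inequality transparent for an arbitrary symmetric gauge function.
\end{itemize}

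What the paper's route buys: it isolates a reusable one-variable fact. For an increasing differentiable $f$, failure of degree-$2$ monotonicity can always be witnessed at a pair with $f'(a)f'(b)>0$, so only one matrix construction is ever needed.
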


\begin{proof}

If $f$ is matrix monotone of degree $2$, then Proposition \ref{FromMonotoneToConvex} shows that $f$ is radially convex of order $2$ with respect to $\d_{\|\cdot\|,f}$.

\noindent For the converse, we argue by contraposition. In view of the preceding proposition, it is enough to show that the additional assumption
\begin{equation*}
f'(a)f'(b) > 0
\end{equation*}
can always be satisfied. Suppose that $f$ is not matrix monotone of degree $2$. If $f$ is not monotone on $\left(0, \infty\right)$, then Proposition \ref{PropositionMonotoneDegreeOne} implies that $f$ is not radially convex of order $2$, and there is nothing to prove. We may therefore assume that $f$ is monotone. Without loss of generality, we may assume that $f$ is increasing; otherwise, replace $f$ by $-f$, which induces the same metric. Then there exist $a < b$ such that
\begin{equation*}
\L_{f}(a,b) = \begin{pmatrix} f'(a) & f^{[1]}(a,b) \\ f^{[1]}(a,b) & f'(b) \end{pmatrix}
\end{equation*}
is not positive semidefinite. Since $f$ is increasing,
\begin{equation*}
f'(a) \geq 0\,, \qquad f'(b) \geq 0\,, \qquad f^{[1]}(a,b) \geq 0\,.
\end{equation*}
Hence
\begin{equation*}
f'(a)f'(b) - f^{[1]}(a,b)^{2} < 0\,.
\end{equation*}
In particular, $f^{[1]}(a,b) > 0$.

\noindent If $f'(a) > 0$ and $f'(b) > 0$, then the hypotheses of the preceding proposition are satisfied, and the result follows. It remains to consider the case in which at least one of $f'(a)$ and $f'(b)$ is zero. Suppose first that $f'(a)=0$ and $f'(b)>0$. By the mean value theorem, there exists $c \in \left(a, b\right)$ such that
\begin{equation*}
f'(c) = f^{[1]}(a,b) > 0\,.
\end{equation*}
By Darboux's theorem for derivatives, for every sufficiently small $\varepsilon > 0$ there exists $a_{\varepsilon} \in \left(a, c\right)$ such that
\begin{equation*}
f'(a_{\varepsilon}) = \varepsilon.
\end{equation*}
Since $f'(c)>0$ and $f$ is increasing, we have $f(b) > f(c)$. Hence
\begin{equation*}
f^{[1]}(a_{\varepsilon}, b) = \frac{f(b)-f(a_{\varepsilon})}{b-a_{\varepsilon}} \geq \frac{f(b)-f(c)}{b-a} > 0\,.
\end{equation*}
Therefore, for $\varepsilon>0$ sufficiently small,
\begin{equation*}
f'(a_{\varepsilon})f'(b)-f^{[1]}(a_{\varepsilon},b)^{2} = \epsilon f'(b)-f^{[1]}(a_{\varepsilon},b)^{2} < 0\,.
\end{equation*}
Thus $\L_{f}(a_{\varepsilon},b)$ is not positive semidefinite and
\begin{equation*}
f'(a_{\varepsilon})f'(b)>0\,.
\end{equation*}
The case $f'(a)>0$ and $f'(b)=0$ is analogous. Finally, suppose that
\begin{equation*}
f'(a) = f'(b) = 0\,.
\end{equation*}
Again, by the mean value theorem there exists $c \in \left(a, b\right)$ such that
\begin{equation*}
f'(c) = f^{[1]}(a,b) > 0\,.
\end{equation*}
Choose points $c_{1}, c_{2}$ such that
\begin{equation*}
a < c_{1} < c < c_{2} < b
\end{equation*}
and
\begin{equation*}
f'(c_{1}) > 0\,, \qquad f'(c_{2}) > 0\,.
\end{equation*}
By Darboux's theorem, for every sufficiently small $\varepsilon > 0$ there exist
\begin{equation*}
a_{\varepsilon} \in \left(a, c_{1}\right)\,, \qquad b_{\varepsilon} \in \left(c_{2}, b\right)
\end{equation*}
such that
\begin{equation*}
f'(a_{\varepsilon}) = f'(b_{\varepsilon}) = \varepsilon\,.
\end{equation*}
Since $f$ is increasing and $c_{1} < c_{2}$,
\begin{equation*}
f^{[1]}(a_{\varepsilon}, b_{\varepsilon}) \geq \frac{f(c_{2})-f(c_{1})}{b-a}>0\,.
\end{equation*}
Therefore, for $\varepsilon>0$ sufficiently small,
\begin{equation*}
f'(a_{\varepsilon})f'(b_{\varepsilon}) - f^{[1]}(a_{\varepsilon},b_{\varepsilon})^{2} = \varepsilon^{2} - f^{[1]}(a_{\varepsilon},b_{\varepsilon})^{2} < 0\,.
\end{equation*}
Hence $\L_{f}(a_{\varepsilon}, b_{\varepsilon})$ is not positive semidefinite and
\begin{equation*}
f'(a_{\varepsilon})f'(b_{\varepsilon}) > 0\,.
\end{equation*}
Thus, whenever $f$ fails to be matrix monotone of degree $2$, we can find a pair satisfying the hypotheses of the preceding proposition. It follows that $f$ is not radially convex of order $2$ with respect to $\d_{\|\cdot\|,f}$. Therefore, radial convexity of order $2$ implies matrix monotonicity of degree $2$\,.

\end{proof}

\end{document}